\def\0D{\Delta^{(0)}}
\def\1D{\Delta^{(1)}}
\newcommand{\Mc}{\mathcal{M}}
\newcommand{\Zc}{\mathcal{Z}}
\newcommand{\Fc}{\mathcal{F}}
\newcommand{\hmod}{{_H\mathcal{M}}}
\newcommand{\contrakg}{\widehat{\Mc^{kG}}}
\newcommand{\contra}{\widehat{\Mc^H}}
\newtheorem{theorem}{Theorem}[section]
\newtheorem{remark}[theorem]{Remark}
\newtheorem{proposition}[theorem]{Proposition}
\newtheorem{lemma}[theorem]{Lemma}
\newtheorem{corollary}[theorem]{Corollary}
\newtheorem{definition}[theorem]{Definition}
\def\build#1_#2^#3{\mathrel{\mathop{\kern 0pt#1}\limits_{#2}^{#3}}}
\newcommand{\ps}[1]{~\hspace{-3pt}^{^{\left(#1\right)}}}
\newcommand{\vect}{\text{Vec}}
\newcommand{\id}{1}
\newcommand{\la}{\triangleright}
\newcommand{\ra}{\triangleleft}
\newcommand{\contramod}{ctrmd}
\numberwithin{equation}{section}
\def\ot{\otimes}
\def\part{\partial}
\def\ot{\otimes}
\def\Aut{\mathop{\rm Aut}\nolimits}
\def\Hom{\mathop{\rm Hom}\nolimits}
\def\build#1_#2^#3{\mathrel{
\mathop{\kern 0pt#1}\limits_{#2}^{#3}}}
\numberwithin{equation}{section}
\newcommand{\comment}[1]{\relax}
\begin{document}
\title{On the anti-Yetter-Drinfeld module-contramodule correspondence.}
\author {Ilya Shapiro}
\date{
}
\maketitle
\begin{abstract}
We study a functor from anti-Yetter Drinfeld modules to contramodules in the case of a Hopf algebra $H$.  This functor is unpacked from the general machinery of \cite{s}.  Some byproducts of this investigation are the establishment of sufficient conditions for this functor to be an equivalence, verification that the center of the opposite category of $H$-comodules is equivalent to anti-Yetter Drinfeld modules in contrast to \cite{sk} where the question of $H$-modules was addressed, and the observation of two types of periodicities of the generalized Yetter-Drinfeld modules introduced in  \cite{hks}.  Finally, we give an example of a symmetric $2$-contratrace on $H$-comodules that does not arise from an anti-Yetter Drinfeld module.

\end{abstract}

\medskip

{\it 2010 Mathematics Subject Classification.} Monoidal category (18D10), abelian and additive category (18E05), cyclic homology (19D55), Hopf algebras	(16T05).

\section{Introduction.}
This paper grew out of the author's attempts to better understand contramodules at least in some simple examples.  The simplest case being the Hopf algebra $kG$ where $G$ is a discrete infinite group.  Contramodules over a coalgebra were introduced by Eilenberg and Moore in 1965 and can be viewed either as algebraic structures allowing infinite combinations or a better behaved notion than that of modules over the dual algebra (see Remark \ref{contradiff}).  They do not strictly speaking generalize comodules, but do have a non-trivial intersection with them.  In our investigations we found \cite{pos} to be very helpful, in fact the phenomenon of this underived comodule-contramodule correspondence without the anti-Yetter-Drinfeld enhancement is investigated there as well.

The introduction of anti-Yetter-Drinfeld contramodule coefficients to the Hopf-cyclic cohomology theory in \cite{contra} that followed the definition of anti-Yetter-Drinfeld module coefficients in \cite{hkrs2} can in retrospect be conceptually understood as  being completely natural since they are seen to be exactly corresponding to the representable symmetric $2$-contratraces, see \cite{s} and \cite{sk}.  The latter form a well behaved class of Hopf-cyclic coefficients explored in \cite{hks} and \cite{s}, that lead directly to Hopf-cyclic type cohomology theories.

Roughly speaking, the category of stable anti-Yetter-Drinfeld modules consists of $H$-modules and comodules such that the two structures are compatible in a way that ensures that they form the center of a certain bimodule category \cite{hks}.  A similar statement with contramodule structure replacing the comodule one can be made about   anti-Yetter-Drinfeld contramodules.  In general understanding objects in these categories is not a simple task, however in the case of $H=kG$ the former category is known to consist of $G$-graded $G$-equivariant vector spaces, i.e., $\oplus_{g\in G}M_g$ with $x: M_g\to M_{xgx^{-1}}$.  Stability, a condition that ensures cyclicity, translates to $x=Id_{M_x}$.  We could find no similarly simple description of the anti-Yetter-Drinfeld contramodule category in the literature.  It turns out, Corollary \ref{kgeq}, that this category is also equivalent to $G$-graded $G$-equivariant vector spaces but the objects are now $\prod_{g\in G}M_g$.  The Theorem \ref{semisimple} is a more general case of this correspondence.

The above anti-Yetter-Drinfeld module-contramodule correspondence was the motivation for the rest of the results in this paper.  Namely, the Proposition \ref{semisimplecase} shows that the equivalence arises from a functor $M\mapsto\widehat{M}$ from comodules to contramodules.  This functor can be found in \cite{pos} but arose independently from the considerations of \cite{s} which furthermore demonstrate that it works on the anti-Yetter-Drinfeld versions as well.  More precisely, for $M$ a stable anti-Yetter-Drinfeld module we consider $\Fc(-)=\Hom(-,M)^H$ which is a symmetric $2$-contratrace on $H$-comodules, i.e., a contravariant functor from $\Mc^H$ to $\vect$ subject to a trace-like symmetry. Its pullback to the category $\hmod$ of $H$-modules is $\Hom_H(-,\widehat{M})$.  The pullback construction reduces in this case to the observation that $H\in\Mc^H$ is an algebra and the category of $H$-bimodules in $\Mc^H$ is equivalent to $\hmod$.  The pullback $\Hom_H(-,\widehat{M})$ is obtained as $\Fc_H$, i.e., the equalizer of the action maps $\Fc(V)\to\Fc(H\ot V)$ and $\Fc(V)\to\Fc(V\ot H)$ with the targets identified via the symmetry of $\Fc$.  Though this can be used as the definition of $\widehat{M}$, we give an explicit construction of both the contramodule structure, essentially agreeing with \cite{pos}, and the $H$-action on $\widehat{M}=\Hom(H,M)^H$.

It turns out that, not surprisingly, $M\mapsto\widehat{M}$ is not always an equivalence, but it does have a left adjoint, that we found in \cite{pos} and upgraded to the anti-Yetter-Drinfeld setting here.  The key object when studying the question of equivalence is the ideal of left integrals for $H$ as introduced in \cite{integral}.  This object seems to be the first example of a generalized Yetter-Drinfeld module of charge other than $1$ or $-1$, its charge is $2$. These were introduced in \cite{hks} without any hope that anything other than $\pm 1$ would be useful.  In fact, the conditions for the comodule-contramodule correspondence are closely related to the presence of a $2$-periodicity of the charges, see  Remark \ref{periodicity 1}.  Furthermore, in studying the question of stability of anti-Yetter-Drinfeld modules/contramodules and the generalization of this concept to more general charges (in a way that was necessarily different from \cite{hks}) we observed a second kind of periodicity within a generalized Yetter-Drinfeld category of a fixed charge.  The Remark \ref{second periodicity} describes an action of $\mathbb{Z}/i\mathbb{Z}$ on Yetter-Drinfeld modules of charge $i-1$ and Yetter-Drinfeld contramodules of charge $i+1$.  This action is compatible with the generalized $M\mapsto \widehat{M}$ that sends Yetter-Drinfeld modules of charge $i-1$ to Yetter-Drinfeld contramodules of charge $i+1$; the case of $i=0$ is the usual anti-Yetter-Drinfeld situation.

Identifying categories of interest with centers of bimodule categories such as was done in \cite{hks} and \cite{sk} is carried through this paper as well.  We point to the summary Theorem \ref{summarythm} that is of the \cite{hks} flavor, and the  Corollary \ref{saydmods} of the \cite{sk} flavor as examples.  One of the natural questions that arose after \cite{hks} was if symmetric $2$-contratraces give a more general class of coefficients for Hopf-cyclic type theories even in the case of Hopf algebras.  It turns out \cite{sk} that for the case of $\hmod$ the representable symmetric $2$-contratraces are equivalent to the  anti-Yetter-Drinfeld contramodules, and similarly (Corollary \ref{reptrace1}) for the case of $\Mc^H$ the representable symmetric $2$-contratraces are equivalent to the anti-Yetter-Drinfeld modules.  Thus one needs only find a non-representable example of a contratrace in order to have a new coefficient in the $H$-comodule case.  This is explained in Section \ref{nonrep}.

The paper is arranged as follows: Section \ref{hcomodules} is devoted to the establishment of the fact that $\Mc^H$ is biclosed, and thus it makes sense to consider the opposite bimodule category $\Mc^{H op}$ with the adjoint action.  The center is shown to consist of  anti-Yetter-Drinfeld modules; this identifies symmetric representable $2$-contratraces with stable anti-Yetter-Drinfeld modules.  Finally we introduce the functor $M\mapsto\widehat{M}$.  In Section \ref{adjunctionssection} we demonstrate that the adjoint pair of functors between comodules and contramodules: $N\mapsto N'$ and $M\mapsto\widehat{M}$ induce the same between the anti-Yetter-Drinfeld versions and also the stable anti-Yetter-Drinfeld versions.  Section \ref{correspondence} deals with the question of equivalence established by $M\mapsto\widehat{M}$ and ends with an example of a new coefficient for the case of $H=kG$.  In Section \ref{period} we extend $M\mapsto\widehat{M}$ to the generalized Yetter-Drinfeld modules and discuss two types of periodicities and the compatibility of  $M\mapsto\widehat{M}$ with them.

Some things to keep in mind: for a coalgebra $C$ we use the following version of Sweedler notation $\Delta(c)=c^1\ot c^2$.  For a right comodule $N$ over $C$ we use $\rho(n)=n_0\ot n_1$.  All Hopf algebras have invertible antipodes $S$ and are over a field $k$ of characteristic $0$.  We denote by $\Hom(-,-)^H$ and $\Hom_H(-,-)$ the morphisms in $\Mc^H$ and $\hmod$ respectively, while $\Hom(-,-)$ stands for $k$-linear maps.

\bigskip

\textbf{Acknowledgments}: The author wishes to thank Masoud Khalkhali and Piotr Hajac for stimulating questions and discussions.   This research was supported in part by the NSERC Discovery Grant number 406709.

\bigskip

\section{The category of $H$-comodules.}\label{hcomodules}

This section is primarily dedicated to  the establishment of the fact that the monoidal category $\Mc^H$ of $H$-comodules is biclosed, and the analysis of the center of the bimodule category over $\Mc^H$ resulting from considering $\Mc^{H op}$.  This establishes an analogue of a result in \cite{sk} describing the center as the category of $aYD$-modules for $H$.

\subsection{Internal Homs in the category of $H$-comodules.}\label{comodules}
Motivated by the existence of internal Homs in $\hmod$, and thus the possibility of describing representable contratraces on $\hmod$ as central elements in the opposite category, we will now address the same question in $\Mc^H$, the monoidal category of $H$-comodules.

Since in the finite dimensional $H$ case, we have that $\Mc^H\simeq{_{H^*}\Mc}$ so we have a suggestive way of obtaining the required formulas. We note that some modifications do need to be made to account for possible infinite dimensionality of $H$.

For $W,V\in\Mc^H$ consider $\rho:\Hom(W,V)\to\Hom(W,V\ot H)$ given by  \begin{equation}\label{coactionleft}\rho f(w)=f(w_0)_0\ot f(w_0)_1 S(w_1).\end{equation}

\begin{definition}
Define $\Hom^l(W,V)$ as the subspace of $\Hom(W,V)$ that consists of $f$ such that $\rho f\in\Hom(W,V)\ot H$.
\end{definition}

We can define two maps $$``Id\ot\Delta"=(Id\ot\Delta)\circ -$$ and $$``\rho\ot Id"=(Id_V\ot m\ot Id_H)\circ(Id_{V\ot H}\ot\sigma_{H,H})\circ(\rho_V\ot Id_{H\ot H})\circ(f\ot S)\circ\rho_W$$ from $\Hom(W,V\ot H)$ to $\Hom(W,V\ot H\ot H)$.  The latter can be written down more manageably as follows: let  $f(w)=w\ps{1}\ot w\ps{2}$ then $$``\rho\ot Id"(f)(w)=((w_0)\ps{1})_0\ot((w_0)\ps{1})_1 S(w_1)\ot(w_0)\ps{2}.$$  A direct computation shows that \begin{equation}\label{ass}``Id\ot\Delta"\circ\rho=``\rho\ot Id"\circ\rho.\end{equation}  Note that when restricted to $\Hom(W,V)\ot H$ the maps $``Id\ot\Delta"$ and $``\rho\ot Id"$ are actually $Id\ot\Delta$ and $\rho\ot Id$ respectively.  The formula \eqref{ass} has two important and immediate consequences: $\rho:\Hom^l(W,V)\to\Hom^l(W,V)\ot H$, whereas before we only knew that it lands in $\Hom(W,V)\ot H$, and $\rho$ is a coaction.

It is not hard to see that $\Hom^l(W,V)$ is contravariant in $W$ and covariant in $V$.  More precisely, let $\phi\in\Hom(W',W)^H$ and $\theta\in\Hom(V,V')^H$, then the following diagram commutes: $$\xymatrix{\Hom(W,V)\ar[rr]^{\theta\circ -\circ\phi}\ar[d]^{\rho}& & \Hom(W',V')\ar[d]^{\rho}\\ \Hom(W,V\ot H)\ar[rr]^{(\theta\ot Id)\circ -\circ\phi} & & \Hom(W',V'\ot H) \\
}$$ and so $\theta\circ -\circ\phi:\Hom^l(W,V)\to\Hom^l(W',V')$ and it is a map of $H$-comodules.

\begin{lemma}\label{adjunction}
We have natural identifications $$\Hom(T\ot W,V)^H\simeq\Hom(T,\Hom^l(W,V))^H,$$ i.e., $\Hom^l(-,-)$ is the left internal Hom in $\Mc^H$.
\end{lemma}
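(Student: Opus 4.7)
The plan is to verify the adjunction by showing that the usual $k$-linear currying $f \mapsto \hat f$, where $\hat f(t)(w) = f(t \ot w)$, restricts to a bijection between the two Hom-spaces of $H$-comodule maps; the subspace $\Hom^l(W,V)$ is forced by exactly this requirement.

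First, for $f \in \Hom(T\ot W, V)^H$ I compute $\rho\hat f(t)(w)$ directly from \eqref{coactionleft}. The comodule-map hypothesis on $f$ with respect to the diagonal coaction $\rho(t\ot w') = t_0\ot w'_0 \ot t_1 w'_1$ (applied to $w' = w_0$) rewrites $f(t\ot w_0)_0 \ot f(t\ot w_0)_1$ as $f(t_0 \ot w_{00}) \ot t_1 w_{01}$. Coassociativity of the $W$-coaction identifies $w_{00}\ot w_{01}\ot w_1$ with $w_0 \ot w_1^1 \ot w_1^2$, and then the antipode identity $w_1^1 S(w_1^2) = \epsilon(w_1)1_H$ together with the counit axiom collapse the $H$-factors to give $\rho \hat f(t)(w) = f(t_0 \ot w)\ot t_1$. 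The right-hand side manifestly lies in $\Hom(W,V)\ot H$, so $\hat f(t) \in \Hom^l(W,V)$; and the same equality read as $\rho\hat f(t) = \hat f(t_0)\ot t_1$ says that $\hat f$ is an $H$-comodule map into $\Hom^l(W,V)$.

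For the reverse direction, given $g \in \Hom(T, \Hom^l(W,V))^H$, define $\check g(t\ot w) = g(t)(w)$. The key observation, effectively inverting \eqref{coactionleft} using invertibility of $S$, is that for any $\phi\in\Hom^l(W,V)$ one has $\phi(w)_0 \ot \phi(w)_1 = \rho(\phi)(w_0)\cdot(1\ot w_1)$, which again follows by coassociativity and the antipode-counit manipulation $S(w_1^1)w_1^2 = \epsilon(w_1)1_H$. Applied to $\phi = g(t)$ together with the identity $\rho(g(t)) = g(t_0)\ot t_1$ in $\Hom^l(W,V)\ot H$ coming from the comodule hypothesis on $g$, this yields $\rho_V(g(t)(w)) = g(t_0)(w_0)\ot t_1 w_1$, which is exactly the statement that $\check g$ intertwines the diagonal coaction on $T\ot W$ with the coaction on $V$.

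Since at the level of $k$-linear maps $f\mapsto \hat f$ and $g\mapsto \check g$ are mutually inverse, they restrict to inverse bijections on the comodule-map subspaces, yielding the desired adjunction. Naturality in $T$, $V$, and contravariantly in $W$ is immediate from the commutative square displayed just before the lemma together with the naturality of $k$-linear currying. The only real technical point is the careful Sweedler bookkeeping; once the antipode and coassociativity are applied in the correct order, both directions of the bijection follow with no conceptual obstacle.
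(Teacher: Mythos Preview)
Your proof is correct and follows essentially the same approach as the paper: both verify that the standard $k$-linear currying bijection $f\leftrightarrow\hat f$ restricts to the $H$-comodule map subspaces via direct Sweedler calculus using coassociativity and the antipode identity. Your reverse direction is organized slightly more cleanly via the ``inversion'' identity $\phi(w)_0\ot\phi(w)_1=\rho(\phi)(w_0)\cdot(1\ot w_1)$, though note that this step only requires the antipode axiom $S(h^1)h^2=\epsilon(h)1$, not invertibility of $S$.
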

\begin{proof}
Note that $f\in\Hom(T\ot W,V)^H$ if and only if \begin{equation}\label{invar1}f(t\ot w)_0\ot f(t\ot w)_1=f(t_0\ot w_0)\ot t_1 w_1.\end{equation} On the other hand $\phi\in\Hom(T,\Hom^l(W,V))^H$ if and only if \begin{equation}\label{invar2}\rho\phi_t=\phi_{t_0}\ot t_1 \end{equation} where $\phi_t\in\Hom^l(W,V)$.

Let $f\in\Hom(T\ot W,V)^H$ then if we define $f_t(w)=f(t\ot w)$ we have \begin{align*}\rho f_t(w)&=f(t\ot w_0)_0\ot f(t\ot w_0)_1 S(w_1)\\
&=f(t_0\ot w_{0,0})\ot t_1 w_{0,1}S(w_1)\\
&=f(t_0\ot w_0)\ot t_1 w_1 S(w_2)\\
&=f(t_0\ot w)\ot t_1\\
&=f_{t_0}\ot t_1.
\end{align*}  So that $t\mapsto f_t\in\Hom(T,\Hom^l(W,V))^H$.  Conversely, if $\phi\in\Hom(T,\Hom^l(W,V))^H$ then define $\phi(t\ot w)=\phi_t(w)$ then $\phi(t_0\ot w)\ot t_1= \phi(t\ot w_0)_0\ot \phi(t\ot w_0)_1 S(w_1)$ so that \begin{align*}\phi(t_0\ot w_0)\ot t_1 w_1
&=\phi(t\ot w_{0,0})_0\ot\phi(t\ot w_{0,0})_1 S(w_{0,1})w_1\\
&=\phi(t\ot w_{0})_0\ot\phi(t\ot w_{0})_1 S(w_{1})w_2\\
&=\phi(t\ot w)_0\ot\phi(t\ot w)_1
\end{align*} and thus $t\ot w\mapsto\phi_t(w)\in\Hom(T\ot W,V)^H$.

So the usual bijection $f(t\ot w)=f_t(w)$ establishes a natural identification between  $\Hom(T\ot W,V)^H$ and $\Hom(T,\Hom^l(W,V))^H$ as required.
\end{proof}

\begin{remark}
From now on we will denote the coaction $\rho$ of \eqref{coactionleft} by $\rho^l$ since $$\rho^l:\Hom^l(W,V)\to\Hom^l(W,V)\ot H. $$
\end{remark}

\begin{remark}
Note that we have a natural fully faithful embedding of $\Mc^H$ into ${_{H^*}\Mc}$.  The right adjoint to it can be used to define $\Hom^l(W,V)$.  Namely, the formula \eqref{coactionleft} defines a left $H^*$-module structure on $\Hom(W,V)$ via $\chi f=(Id_V\ot\chi)(\rho f)$.  Then it is easy to see that $$\Hom(W,V)^{rat}=\Hom^l(W,V)$$
where $(-)^{rat}$is the right adjoint that features prominently in \cite{integral}.\end{remark}

Repeating the above considerations nearly verbatim, we define the right internal Hom for $\Mc^H$ as follows.  Begin by defining
\begin{equation}\label{coactionright}\rho^r f(w)=f(w_0)_0\ot S^{-1}(w_1)f(w_0)_1 .\end{equation}

\begin{definition}
Define $\Hom^r(W,V)$ as the subspace of $\Hom(W,V)$ that consists of $f$ such that $\rho^r f\in\Hom(W,V)\ot H$.
\end{definition}

We again obtain that $\rho^r:\Hom^r(W,V)\to\Hom^r(W,V)\ot H$ and is a coaction.  Furthermore, we have natural adjunctions: $$\Hom(W\ot T,V)^H\simeq\Hom(T,\Hom^r(W,V))^H.$$

As usual we now have the opposite category $\Mc^{H op}$ with $$V\ra W= \Hom^r(W, V), \quad ~~ \text{and} \quad ~~ W\la V=\Hom^l(W, V)$$ and we may examine its center $\Zc_{\Mc^H}(\Mc^{H op})$.

\begin{remark}
We observe that if $V\in\Mc^H$ is finite dimensional then $$\Hom^l(V,W)=W\ot V^*$$ and $$\Hom^r(V,W)={^*V}\ot W$$ where $V^*=\Hom^l(V,k)$ and ${^*V}=\Hom^r(V,k)$ and both $V^*$ and ${^*V}$ are $\Hom_k(V,k)$ as vector spaces.  So that $\Mc^H_{fd}$ is rigid with $${^{**}V}=V^{S^{-2}}\quad\text{and}\quad V^{**}=V^{S^2},$$ where $V^{S^{2i}}$ denotes the $H$-comodule with the coaction modified by $S^{2i}$.
\end{remark}

\subsection{The center of the opposite bimodule category.}

We recall  from \cite{hkrs2}  that   a left-right anti-Yetter-Drinfeld module $M$ over a Hopf algebra $H$   is  a left $H$-module and a right $H$-comodule satisfying
 \begin{equation}\label{AYD1}
   (hm)_{0}\ot (hm)_{1}=  h^{2}m_{0}\ot h^{3}m_{1}S(h^{1}).
 \end{equation}  It is stable if $m_1 m_0=m$ for all $m\in M$.

Recall, for example from \cite{s}, the notions of the opposite bimodule category and  of the center of a bimodule category.

\begin{proposition}\label{aydprop2}
The category of $aYD$-modules for $H$ is equivalent to $\Zc_{\Mc^H}(\Mc^{H op})$.

\end{proposition}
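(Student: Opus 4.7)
The plan is to identify the half-braiding data on an object of $\Mc^{Hop}$ with the supplementary structure turning an $H$-comodule into an anti-Yetter-Drinfeld module. First, I would unpack the center concretely: an object of $\Zc_{\Mc^H}(\Mc^{Hop})$ consists of an $H$-comodule $M$ equipped with a natural family of $H$-colinear isomorphisms
\[
\sigma_V : V \triangleright M \xrightarrow{\sim} M \triangleleft V, \qquad V \in \Mc^H,
\]
satisfying the hexagon axiom for half-braidings. Translated through the bimodule structure of $\Mc^{Hop}$ recalled just before the statement, this becomes a natural $H$-colinear isomorphism $\Hom^l(V, M) \cong \Hom^r(V, M)$.

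Next, I would extract an $H$-action on $M$ from $\sigma$. The strategy is to evaluate the half-braiding at the regular right comodule $V = H$: combining Lemma \ref{adjunction} with its right-handed analogue gives identifications which allow one to interpret $\sigma_H$, paired with an element $h \in H$, as producing a linear map $m \mapsto h \cdot m$. Conversely, given an $aYD$-module, I would write down a candidate half-braiding of the form $\sigma_V(f)(v) = S(v_1) \cdot f(v_0)$ (or a similar variant whose precise shape is dictated by matching the coactions \eqref{coactionleft} and \eqref{coactionright}), and verify by a direct computation that this formula lands in $\Hom^r(V, M)$, is $H$-colinear, invertible, and natural in $V$.

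The crucial step is showing that the hexagon axiom for $\sigma$ is equivalent to the $aYD$ compatibility \eqref{AYD1}. The factors $h^2$ and $h^3 S(h^1)$ on the right-hand side of \eqref{AYD1} originate from the antipode twists distinguishing $\rho^l$ and $\rho^r$ in \eqref{coactionleft} and \eqref{coactionright}; combined with the diagonal coaction on a tensor product that appears on one side of the hexagon, these produce precisely the twist in \eqref{AYD1}. Finally, a morphism in the center is an $H$-colinear map of the underlying comodules that intertwines the half-braidings; via the extracted $H$-action, this translates to a map that is simultaneously $H$-linear and $H$-colinear, i.e., a morphism of $aYD$-modules.

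The main obstacle I anticipate is the careful bookkeeping of antipode factors and of arrow directions: the opposite category $\Mc^{Hop}$ reverses arrows in the bimodule action, which affects on which side of the hexagon the $S^{\pm 1}$ appears, and ultimately determines whether one recovers $YD$ or $aYD$. A secondary technical point is justifying that $\sigma$ is actually determined by its value at $V = H$, so that the extraction of the $H$-action retains all the structure; this should follow from naturality in $V$ together with the fact that every $H$-comodule admits a map to a cofree one, but it requires care in the absence of finite-dimensionality assumptions on $H$.
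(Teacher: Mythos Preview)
Your outline is correct and matches the paper's approach closely: define $\tau_V:\Hom^l(V,M)\to\Hom^r(V,M)$ from an $aYD$-action, and conversely extract the $H$-action from the half-braiding at $V=H$; the paper's explicit formula is $\tau f(w)=w_1 f(w_0)$ (no antipode), with inverse $S^{-1}(w_1)f(w_0)$. The technical subtlety you anticipate is real but takes a slightly different form than you describe: the element $h\mapsto\epsilon(h)m$ one wants to feed into $\tau_H$ to recover the action does \emph{not} lie in $\Hom^l(H,M)$, so the paper first extends $\tau$ from $\Hom^l(W,M)$ to all of $\Hom(W,M)$ via the identification $\Hom(W,M)=\varprojlim_\alpha\Hom^l(W_\alpha,M)$ over finite-dimensional subcomodules $W_\alpha\subset W$; your cofree-comodule idea (naturality along $\rho_W:W\to\underline{W}\otimes H$) is then used, exactly as you suggest, to verify that the recovered $\tau$ always has the form $\tau f(w)=w_1 f(w_0)$.
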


\begin{proof}
Let $M$ be an $aYD$-module and define the central structure $$\tau:\Hom(W,M)\to\Hom(W,M)$$ $$\tau f(w)=w_1 f(w_0).$$  Note that $\tau$ is invertible with $\tau^{-1}f(w)=S^{-1}(w_1)f(w_0)$.  Define a map $``\tau\ot Id":\Hom(W,M\ot H)\to\Hom(W,M\ot H)$ by $$``\tau\ot Id"=(a\ot Id)\circ\sigma_{M,H,H}\circ(f\ot Id)\circ\rho_W$$ so that if $f(w)=w\ps{1}\ot w\ps{2}$ then $$``\tau\ot Id"(f)(w)=w_1(w_0)\ps{1}\ot(w_0)\ps{2}\!\!\!\!\!\!.$$  A direct computation (using the $aYD$ condition \eqref{AYD1}) demonstrates that the following diagram commutes:\begin{equation}\label{coco}\xymatrix{\Hom(W,M)\ar[r]^\tau\ar[d]^{\rho^l} & \Hom(W,M)\ar[d]^{\rho^r}\\
\Hom(W,M\ot H)\ar[r]^{``\tau\ot Id"} & \Hom(W,M\ot H)\\
}\end{equation}
and since $``\tau\ot Id"$ restricted to $\Hom(W,M)\ot H$ is actually $\tau\ot Id$, so in fact $\tau:\Hom^l(W,M)\to\Hom^r(W,M)$ and it is an isomorphism in $\Mc^H$.

Observe that $\tau$ is natural in $W$, i.e., if $\phi:W'\to W$ is a morphism in $\Mc^H$ then $\phi(w_0)\ot w_1=\phi(w)_0\ot\phi(w)_1$ so that $w_1 f(\phi(w_0))=\phi(w)_1 f(\phi(w)_0)$ and $\tau(f\circ\phi)=\tau f\circ\phi$.

If $\theta: M\to M'$ is a map of $aYD$-modules, then $$\theta\circ\tau f(w)=\theta(w_1 f(w_0))=w_1\theta f(w_0)=\tau(\theta\circ f)(w)$$ so that a map of $aYD$-modules induces a map of central elements.

To check the commutativity of \begin{equation}\label{assoc}\xymatrix{W\la(V\la M)\ar[r]^{Id\la\tau}& W\la(M\ra V)\ar[r]^{\tau\ra Id} &(M\ra W)\ra V\ar[d]\\
(W\ot V)\la M\ar[u]\ar[rr]^{\tau} & &M\ra(W\ot V) \\
}\end{equation} is to check that going along the bottom and obtaining $w\ot v\mapsto (w_1v_1)f(w_0\ot v_0)$ is the same as the long way around which gives $w\ot v\mapsto w_1(v_1 f_{w_0}(v_0))=w_1(v_1 f(w_0\ot v_0))$; and they are the same by the usual $H$-action axiom.  Similarly, the unitality of the action implies that $k\la M\to M\ra k$ is the identity since $\tau:m\mapsto 1m$.

What has been shown so far is that if $M$ is an $aYD$-module, then $(M,\tau)\in\Zc_{\Mc^H}(\Mc^{H op})$ and any $\theta:M\to M'$ a morphism of $aYD$-modules induces a morphism between the corresponding central elements.

Conversely, let $M\in\Zc_{\Mc^H}(\Mc^{H op})$ so that we have natural isomorphisms $\tau:\Hom^l(W,M)\to\Hom^r(W,M)$.  Note that  $$\Hom(W,M)=\varprojlim_\alpha\Hom^l(W_\alpha,M)$$ where $W_\alpha\subset W$ is a finite dimensional sub-comodule since any $w\in W$ is contained in such an $W_\alpha$ and so $$\Hom(W,M)=\Hom(\varinjlim_\alpha W_\alpha, M)=\varprojlim_\alpha\Hom(W_\alpha,M)=\varprojlim_\alpha\Hom^l(W_\alpha,M).$$  So we have a $\tau:\Hom(W,M)\to\Hom(W,M)$ that satisfies a version of all the properties that make the original $\tau$ so useful.  Denote by $r$ the composition $$M\to\Hom(H,M)\to\Hom(H,M)$$  so that $m\mapsto \tau(h\mapsto\epsilon(h)m)$. Define \begin{equation}\label{action}hm=r_m(h).\end{equation}  Note that we needed to use $\Hom(H,M)$ instead of $\Hom^l(H,M)$ since $h\mapsto \epsilon(h)m$ is not in $\Hom^l(H,M)$.

By the unitality of $\tau$ we have $ev_1\circ\tau=ev_1$ so that $$1m=r_m(1)=ev_1\tau(h\mapsto\epsilon(h)m)=ev_1(h\mapsto\epsilon(h)m)=\epsilon(1)m=m.$$

Furthermore by the ``associativity" of $\tau$, i.e., the diagram \eqref{assoc} and its naturality, we have \begin{align*}(xy)m&=r_m(xy)\\
&=\tau(h\mapsto\epsilon(h)m)(xy)\\
&=\tau(h\ot h'\mapsto\epsilon(hh')m)(x\ot y)\\
&=\tau(h\mapsto\tau(h'\mapsto\epsilon(hh')m)(y))(x)\\
&=\tau(h\mapsto\epsilon(h)\tau(h'\mapsto\epsilon(h')m)(y))(x)\\
&=\tau(h\mapsto\epsilon(h)r_m(y))(x)\\
&=r_{r_m(y)}(x)\\
&=x(ym).
\end{align*}

Let $\theta: M\to M'$ be a map in the center, then we have $$\xymatrix{M\ar[r]^{\!\!\!\!\!\!\!\!\!\!\!\!\!\!\!\!-\circ\epsilon}\ar[d]^\theta & \Hom(H,M)\ar[r]^{\tau}\ar[d]^{\theta\circ -} & \Hom(H,M)\ar[d]^{\theta\circ -}\\
M'\ar[r]^{\!\!\!\!\!\!\!\!\!\!\!\!\!\!\!\!-\circ\epsilon} & \Hom(H,M')\ar[r]^{\tau} & \Hom(H,M')\\
}$$ where the left square commutes trivially and the right one commutes by definition, so that $$\theta(hm)=\theta(r_m(h))=r_{\theta(m)}(h)=h\theta(m).$$

Before proving that the $H$-action defined above satisfies the $aYD$-module condition \eqref{AYD1} we will show that the definition of the action from $\tau$ and vice versa are mutually inverse.  Let an $H$-action be given, then we set $\tau f(w)=w_1 f(w_0)$ so that the action becomes $$r_m(h)=\tau(x\mapsto\epsilon(x)m)(h)=h^2\epsilon(h^1)m=hm,$$ i.e., the original action. On the other hand if $\tau:\Hom(W,M)\to\Hom(W,M)$ is given and we defined the action by $hm=r_m(h)=\tau(x\mapsto\epsilon(x)m)(h)$, then we obtain the following.  Let $f\in\Hom(W,M)$, consider $f\ot\epsilon\in\Hom(\underline{W}\ot H, M)$ where $\underline{W}$ is a trivial $H$-comodule.  Note that the co-action map $\rho_W$ is a morphism in $\Mc^H$ from $W$ to $\underline{W}\ot H$.  So $\tau(f\ot\epsilon\circ\rho_W)=\tau(f\ot\epsilon)\circ\rho_W$ and the former is $\tau f$ while the latter is \begin{align*}w&\mapsto \tau(w\ot h\mapsto f(w)\epsilon(h))(w_0\ot w_1)\\
&=\tau(w\mapsto \tau(h\mapsto f(w)\epsilon(h))(w_1))(w_0)\\
\intertext{and since the coaction of $H$ on $\underline{W}$ is trivial so}
&=\tau(h\mapsto\epsilon(h)f(w_0))(w_1)\\
&=w_1f(w_0).
\end{align*}  So that no matter if we start with a $\tau$ or an $H$-action, we always have \begin{equation}\label{tau}\tau f(w)=w_1f(w_0).\end{equation}

Now recall the diagram \eqref{coco}, and note that it now commutes essentially by definition.  Let $W=H$ and keep in mind the formula \eqref{tau}.  We now get that for any $f\in\Hom(H,M)$ we have $$h^3f(h^1)_0\ot f(h^1)_1 S(h^2)=(h^2f(h^1))_0\ot S^{-1}(h^3)(h^2f(h^1))_1$$ and let us apply it to $f(h)=\epsilon(h)m$ to obtain $$h^2m_0\ot m_1S(h^1)=(h^1m)_0\ot S^{-1}(h^2)(h^1m)_1$$ so that \begin{align*}
h^2m_0\ot h^3m_1S(h^1)&=(h^1m)_0\ot h^3S^{-1}(h^2)(h^1m)_1\\
&=(h^1m)_0\ot \epsilon(h^2)(h^1m)_1\\
&=(hm)_0\ot(hm)_1
\end{align*} and $M$ satisfies the $aYD$-module condition \eqref{AYD1}.

\end{proof}

Recall that we denote by $\Zc'_{\Mc^H}(\Mc^{H op})$ the full subcategory that consists of objects such that the identity map $Id\in\Hom(M,M)^H$ is mapped to itself via \begin{equation}\label{stabilitycond}\Hom(M,M)^H\simeq\Hom(\id,M\la M)^H\simeq\Hom(\id,M\ra M)^H\simeq \Hom(M,M)^H.\end{equation} We have a straightforward corollary:

\begin{corollary}\label{saydmods}
The category of $saYD$-modules for $H$ is equivalent to $\Zc'_{\Mc^H}(\Mc^{H op})$.
\end{corollary}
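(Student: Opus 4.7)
The plan is to leverage Proposition \ref{aydprop2} so that the only remaining work is to identify what the stability condition \eqref{stabilitycond} becomes under the correspondence established there. Since Proposition \ref{aydprop2} gives an equivalence between $aYD$-modules and $\Zc_{\Mc^H}(\Mc^{Hop})$, and $\Zc'_{\Mc^H}(\Mc^{Hop})$ is just a full subcategory cut out by the condition that $Id_M$ be fixed by the displayed composition, it suffices to show that, on an $aYD$-module $M$ with its canonical central structure $\tau$ from the proposition, this condition is equivalent to $m_1 m_0 = m$ for all $m \in M$.

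First I would unwind the chain \eqref{stabilitycond} one step at a time. The leftmost isomorphism $\Hom(M,M)^H \simeq \Hom(\id, M\la M)^H = \Hom(k, \Hom^l(M,M))^H$ is the adjunction of Lemma \ref{adjunction} with $T = k$; under it, $Id_M$ corresponds to the morphism $k \to \Hom^l(M,M)$ sending $1 \mapsto Id_M$. Next, the middle isomorphism $M\la M \simeq M\ra M$ is precisely the central structure $\tau$, so the morphism $k \to \Hom^r(M,M)$ obtained after this step sends $1 \mapsto \tau(Id_M)$. Applying the right-hand adjunction then recovers an element of $\Hom(M,M)^H$, namely the map $m \mapsto \tau(Id_M)(m)$.

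Using formula \eqref{tau} established inside the proof of Proposition \ref{aydprop2}, I have $\tau f(w) = w_1 f(w_0)$ for any $f$, and in particular $\tau(Id_M)(m) = m_1 m_0$. Thus the composition \eqref{stabilitycond} sends $Id_M$ to the endomorphism $m \mapsto m_1 m_0$ of $M$. Requiring that $Id_M$ be mapped to itself is therefore exactly the stability condition $m_1 m_0 = m$ for all $m \in M$.

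There is no real obstacle here: Proposition \ref{aydprop2} does all the heavy lifting, and the only care needed is bookkeeping to make sure that the two adjunctions (left-internal Hom on one side and right-internal Hom on the other) are the genuine unit isomorphisms of Lemma \ref{adjunction}, so that they send (resp.\ recover) $Id_M$ without introducing any twist. Once that is checked, formula \eqref{tau} applied to $Id_M$ immediately identifies $\Zc'_{\Mc^H}(\Mc^{Hop})$ with the full subcategory of stable $aYD$-modules, completing the corollary.
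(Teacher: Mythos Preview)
Your proposal is correct and follows essentially the same approach as the paper: both use Proposition \ref{aydprop2} to reduce to checking the stability condition, then apply formula \eqref{tau} with $f = Id_M$ to get $\tau(Id_M)(m) = m_1 m_0$. You are simply more explicit about unwinding the two adjunctions in \eqref{stabilitycond}, whereas the paper leaves that bookkeeping implicit and writes only the one-line computation.
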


\begin{proof}
Recall that an $aYD$-module $M$ is stable if $m_1 m_0=m$ for all $m\in M$.  On the other hand considering $\tau:\Hom(M,M)\to \Hom(M,M)$ we see that according to \eqref{tau} we have $\tau Id(m)=m_1m_0$ and so $\tau Id=Id$ if and only if $M$ is stable.
\end{proof}

Thus we have established the following:
\begin{corollary}\label{reptrace1}
The category of $saYD$-modules for $H$ is equivalent to the category of representable symmetric $2$-contratraces on $\Mc^H$ via $$M\longleftrightarrow\Hom(-,M)^H.$$
\end{corollary}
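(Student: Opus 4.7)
The plan is to identify representable symmetric $2$-contratraces on $\Mc^H$ with objects of $\Zc'_{\Mc^H}(\Mc^{H op})$ and then invoke Corollary \ref{saydmods}. Any representable contravariant functor $\Fc: \Mc^H \to \vect$ has the form $\Fc(-) = \Hom(-, M)^H$ for some $M \in \Mc^H$, so the task reduces to matching symmetric $2$-contratrace structures on such a functor with central structures on $M$, and stability on one side with stability on the other.

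First, I would apply the adjunctions of Lemma \ref{adjunction} and their right-handed counterpart with $T=V$ to obtain natural identifications
\begin{equation*}
\Hom(V \ot W, M)^H \simeq \Hom(V, \Hom^l(W, M))^H \quad\text{and}\quad \Hom(W \ot V, M)^H \simeq \Hom(V, \Hom^r(W, M))^H.
\end{equation*}
A symmetric structure on $\Fc$ is a natural isomorphism $\Fc(V\ot W)\simeq\Fc(W\ot V)$, and via these rewritings together with the Yoneda lemma applied in the variable $V$, this data is equivalent to the datum of a natural isomorphism $\tau: \Hom^l(W, M) \to \Hom^r(W, M)$ in $\Mc^H$.

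Next, I would check that the coherence axioms imposed on the symmetry of a $2$-contratrace (associativity/hexagon for tensor products, unitality for the unit object $k$) translate, under this dictionary, precisely to the defining diagrams of a central element, in particular \eqref{assoc} and its unital counterpart. Proposition \ref{aydprop2} then identifies the resulting objects with $aYD$-modules. Finally, the stability condition on the contratrace side (the symmetry fixing the distinguished element associated with $Id_M$) is exactly the condition \eqref{stabilitycond}, so that Corollary \ref{saydmods} upgrades the identification to $saYD$-modules, and tracking the identifications shows the correspondence is indeed given by $M \longleftrightarrow \Hom(-,M)^H$.

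The main obstacle is the bookkeeping in the second step: one must carefully unpack the coherence axioms for a symmetric $2$-contratrace as formulated in \cite{s} and check that they become, under the adjunctions and Yoneda, exactly the axioms satisfied by a central element of $\Zc_{\Mc^H}(\Mc^{H op})$. Once this is settled, the remaining assertions follow formally from Lemma \ref{adjunction}, Proposition \ref{aydprop2}, and Corollary \ref{saydmods}.
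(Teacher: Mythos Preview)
Your proposal is correct and follows essentially the same route as the paper. The paper presents Corollary \ref{reptrace1} without proof, as an immediate consequence of Corollary \ref{saydmods} together with the general identification (from \cite{s}) of representable symmetric $2$-contratraces on a biclosed monoidal category with $\Zc'$ of its opposite bimodule category; your outline simply unpacks this identification via Lemma \ref{adjunction} and Yoneda, which is precisely how that general fact is established.
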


Contrast that with the $\hmod$ case considered in \cite{sk} where the category of representable symmetric $2$-contratraces is equivalent to the more unusual $saYD$-\emph{contra}modules.

\subsection{A functor from $(s)aYD$-modules to $(s)aYD$-contramodules}\label{functorsec}
This section is motivated by the adjunction on cyclic cohomology of \cite{s} that we explain below.
Given an $saYD$-module $M$, i.e., a representable symmetric $2$-contratrace $\Hom(-,M)^H$, as a special case of the theory developed in \cite{s}, we obtain an $H$-module $\widehat{M}$ such that $\Hom_H(-,\widehat{M})$ is a representable symmetric $2$-contratrace.

We will need to recall from \cite{contra} that a right $C$-contramodule $N$, where $C$ is a counital coassociative coalgebra, is equipped with the contraaction $$\alpha:\Hom(C,N)\to N$$ satisfying \begin{equation}\label{assoccontra}\alpha(x\mapsto\alpha(h\mapsto f(x\ot h)))=\alpha(h\mapsto f(h^1\ot h^2))\end{equation} for any $f\in\Hom(C\ot C,N)$ and \begin{equation}\label{unitcontra}\alpha(h\mapsto\epsilon(h)n)=n\end{equation} for any $n\in N$. Furthermore, a left-right $aYD$-contramodule $N$ is a left $H$-module and a right $H$-contramodule such that for all $h\in H$ and any linear map $f\in \Hom(H, N)$ we have \begin{equation}\label{aydeq}h\alpha(f)=\alpha(h^2 f(S(h^3)-h^1)).\end{equation} It is called stable, i.e., an $saYD$-contramodule, if for all $n\in N$ we have $\alpha(r_n)=n$ where $r_n(h)=hn$.

We will also recall the definitions from \cite{s}: if $M$ is an $aYD$-module then $$\widehat{M}=\Hom(H,M)^H$$ has a left $H$-action via \begin{equation}\label{weird action}h\cdot\phi(-)=h^2\phi(S(h^3)-h^1)\end{equation} and furthermore $\widehat{M}$ has a contraaction $\alpha:\Hom(H,\widehat{M})\to\widehat{M}$ defined as follows.  Let $\theta\in\Hom(H,\widehat{M})$ be viewed as $h\mapsto\theta_h(-)$ then \begin{equation}\label{contraaction}\alpha\theta(h)=\theta_{h^1}(h^2).\end{equation}  It is not hard to check all these statements directly (note that the $aYD$-module condition for $M$ is only used to ensure that the action \eqref{weird action} preserves the $H$-comodule morphisms inside $\Hom(H,M)$), and most importantly we can also check that $\alpha$ is compatible with the action in the $aYD$-contramodule sense, i.e., the identity \eqref{aydeq} holds.

The constructions above describe a functor \begin{equation}\label{thefunctor}M\mapsto\widehat{M}\end{equation} from $(s)aYD$-modules to $(s)aYD$-contramodules. Furthermore, the functor \eqref{thefunctor} is a special case of the pullback of contratraces \cite{s} and so  we have the following:

\begin{proposition}\label{dualityprop1}
Given an $H$-module algebra $A$ and a $saYD$-module $M$, we have an isomorphism of cyclic cohomologies:
$$\widehat{HC}^n_H(A,\widehat{M})\simeq HC^{n,H}(A\rtimes H,M)$$ where the theories considered are of the derived type.
\end{proposition}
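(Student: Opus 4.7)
The statement is essentially a direct instance of the pullback-of-contratraces formalism from \cite{s}, so the proof will consist of assembling the pieces and invoking that machinery rather than doing any computation. The plan is to identify both sides of the asserted isomorphism as derived cyclic cohomologies attached to representable symmetric $2$-contratraces, and then to use the equivalence of bimodule categories that is described in the introduction and in Section \ref{hcomodules} to pass between them.

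First I would set up the right-hand side. Since $A$ is an $H$-module algebra, the crossed product $A\rtimes H$ is an algebra in $\Mc^H$ (with $H$-coaction on the $H$-factor). The $saYD$-module $M$ corresponds, by Corollary \ref{reptrace1}, to the representable symmetric $2$-contratrace $\Hom(-,M)^H$ on $\Mc^H$; evaluating this contratrace on the cyclic bar object of $A\rtimes H$ and passing to the derived refinement of \cite{s} produces exactly $HC^{n,H}(A\rtimes H,M)$. For the left-hand side, the $saYD$-contramodule $\widehat{M}$ constructed in \eqref{thefunctor} corresponds, via the $\hmod$-analogue of Corollary \ref{reptrace1} proved in \cite{sk}, to the representable symmetric $2$-contratrace $\Hom_H(-,\widehat{M})$ on $\hmod$; its associated derived cyclic cohomology of $A$ is $\widehat{HC}^n_H(A,\widehat{M})$.

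The key observation, which is precisely the statement made in the introduction, is that the equivalence between $\hmod$ and the category of $H$-bimodules in $\Mc^H$ sends the algebra $A$ to $A\rtimes H$ (with its tautological $H$-bimodule structure), and the pullback of the contratrace $\Hom(-,M)^H$ along this equivalence is $\Hom_H(-,\widehat{M})$; this is in fact the \emph{definition} of the functor \eqref{thefunctor} in the language of \cite{s}. Consequently the cocyclic objects computing the two sides are isomorphic term-by-term and face-by-face, since the equivalence of bimodule categories is monoidal and hence identifies the two cyclic bar constructions. Passing to the derived versions (both defined uniformly via the pullback formalism of \cite{s}) preserves this isomorphism, and the proposition follows.

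The only real obstacle is bookkeeping: one must ensure that the derived theory $\widehat{HC}^n_H$ with contramodule coefficients from \cite{contra} agrees with the derived cyclic cohomology of the pulled-back contratrace from \cite{s}, and similarly that $HC^{n,H}$ matches the derived cyclic cohomology of $\Hom(-,M)^H$ applied to $A\rtimes H$. Since \eqref{thefunctor} was engineered precisely so that the representable contratraces on the two sides correspond under pullback, this comparison is mechanical; no new computation beyond what is already encoded in the action formula \eqref{weird action} and the contraaction \eqref{contraaction} is required.
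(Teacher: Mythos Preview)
Your proposal is correct and follows essentially the same approach as the paper: the paper does not give an explicit proof but simply asserts that the functor \eqref{thefunctor} is a special case of the pullback of contratraces from \cite{s}, from which the proposition follows immediately. Your write-up merely unpacks that invocation in more detail, identifying both sides as derived cyclic cohomologies of representable symmetric $2$-contratraces related by the pullback along the equivalence $\hmod\simeq{_H(\Mc^H)_H}$, which is precisely the content of the surrounding discussion and of the subsequent remark in the paper.
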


We denote by $\widehat{HC}^n_H(A,\widehat{M})$ the cyclic cohomology obtained from an algebra $A$ and a $saYD$-contramodule $\widehat{M}$ via the associated representable symmetric $2$-contratrace  $\Hom_H(-,\widehat{M})$ on  $\hmod$,  while $HC^{n,H}(A\rtimes H,M)$ denotes the Hopf-cyclic cohomology of an $H$-comodule algebra $A\rtimes H$ with coefficients in a $saYD$-module $M$ obtained from the representable symmetric $2$-contratrace $\Hom(-,M)^H$ on $\Mc^H$.

\begin{remark}
In light of the Corollary \ref{reptrace1} that shows the equivalence between $saYD$-modules and representable symmetric $2$-contratraces on $\Mc^H$ and \cite{sk} where a similar result is demonstrated for $saYD$-contramodules and $\hmod$, the Proposition \ref{dualityprop1} is a concrete realization of the pullback of \emph{representable} contratraces of \cite{s}.
\end{remark}

\section{An adjoint pair of functors.}\label{adjunctionssection}
We will now analyze the functor $M\mapsto\widehat{M}$ with a view towards establishing some sufficient conditions for it being an equivalence. Consider the category $\Mc^H$ of right $H$-comodules and we are interested in comparing it to the category $\contra$ of right $H$-contramodules.  It turns out that the functor $M\mapsto \widehat{M}$, that appeared in \cite{s} motivated by the pullback of contratraces has already appeared in the literature on comodule-contramodule correspondences \cite{pos}, but considered without the extra $H$-module structure that we need.  We will abuse notation somewhat and not usually distinguish between $\widehat{(-)}:\Mc^H\to\contra$ of \cite{pos} and the upgraded version of \cite{s} mentioned above \eqref{thefunctor}.  When we do want to emphasize the difference, the latter will be denoted by $\widehat{(-)}_H$.

Furthermore,  $\widehat{(-)}$ has a left adjoint \cite{pos} \begin{equation}\label{leftadjoint} N\mapsto N' \end{equation} where $N'=H\odot_H N$ is the cokernel of the difference between the maps $Id\ot\alpha$ and $h\ot f\mapsto h^2\ot f(h^1)$ between $H\ot_k\Hom(H,N)$ and $H\ot_k N$: $$H\ot_k\Hom(H,N)\to H\ot_k N\to H\odot_H N\to 0.$$  The comodule structure on $N'$ is given by \begin{equation}\label{coaction}(h\ot n)_0\ot(h\ot n)_1=(h^1\ot n)\ot h^2.\end{equation} When $H$ is finite dimensional then $N'=H\ot_{H^*}N$ so that the notation $\odot_H$ is a bit misleading.

The adjunctions are $$H\odot_H\Hom(H,M)^H\to M$$ $$h\ot f\mapsto f(h)$$ and $$N\to\Hom(H, H\odot_H N)^H$$ $$n\mapsto \{h\mapsto h\ot n\}.$$

\begin{remark}
Just as the functor $M\mapsto\widehat{M}$ was upgraded from the functor between comodules and contramodules to a functor between $aYD$-modules and $aYD$-contramodules by converting an $H$-action on $M$ to an $H$-action on $\widehat{M}$, we can do the same to its left adjoint directly.  Namely, define an $H$-action on $H\ot_k N$ via \begin{equation}\label{notsoweird action}x\cdot(h\ot n)=x^3hS(x^1)\ot x^2n\end{equation} then one can check that if $N$ is an $aYD$-contramodule, then the action is well defined on the cokernel $H\odot_H N$ and gives $N'$ the $aYD$-module structure.
\end{remark}

We will now conceptually investigate if the adjoint pair of the functors above is compatible with the extra structure that we require.  More precisely, $\Mc^H$ is a tensor category in the usual way with $$\rho(m\ot n)=m_0\ot n_0\ot m_1n_1$$ for $m\ot n\in M\ot N$ with $M,N\in\Mc^H$.  Thus $\Mc^H$ is a bimodule category over itself.

On the other hand if $N\in\contra$ and $T\in\Mc^H_{fd}$, i.e., $T$ is a finite dimensional $H$-comodule, then we can define a natural contramodule structure on both $N\ot T$ and $T\ot N$.  Namely, due to the finite dimensionality of $T$, we represent elements of $\Hom(H,N\ot T)$ by $f\ot t$ with $f\in\Hom(H,N)$ and $t\in T$, then \begin{equation}\label{contratensorright}\alpha_{N\ot T}(f\ot t)=\alpha_N(f(-t_1))\ot t_0\end{equation} and similarly \begin{equation}\label{contratensorleft}\alpha_{T\ot N}(t\ot f)=t_0\ot\alpha_N(f(t_1-)) \end{equation} which makes $\contra$ into a bimodule category over $\Mc^H_{fd}$.

The following is the key technical result of this section.  It describes the exact nature of the compatibility of $M\mapsto\widehat{M}$ with the $\Mc^H_{fd}$-bimodule structure on both sides.

\begin{proposition}\label{bimoduleprop}
Let $W\in\Mc^H$ and $T,L\in\Mc^H_{fd}$ then we have: $$\Hom(H,T\ot W\ot L)^H\simeq T^{S^2}\ot\Hom(H,W)^H\ot L^{S^{-2}}$$ $$t\ot f\ot l\mapsto t_0\ot f(S^2(t_1)-S^{-2}(l_1))\ot l_0$$  a natural isomorphism in $\contra$.
\end{proposition}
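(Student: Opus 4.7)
The plan is as follows. Because $T$ and $L$ are finite dimensional, the canonical vector space map $T \ot \Hom(H, W) \ot L \to \Hom(H, T \ot W \ot L)$ sending $t \ot g \ot l$ to $h \mapsto t \ot g(h) \ot l$ is already a bijection. The proposition refines this bijection: after restricting the target to $\Hom(H, -)^H$ and twisting the outer factors of the source to $T^{S^2}$ and $L^{S^{-2}}$, the isomorphism is realized by the shifted formula $\Phi(t \ot f \ot l)(h) = t_0 \ot f(S^2(t_1) h S^{-2}(l_1)) \ot l_0$, and is compatible with the natural contramodule structures. I would proceed in three stages.

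First I would verify that $\Phi(t \ot f \ot l)$ is an $H$-comodule map. Apply the tensor coaction $\rho_{T \ot W \ot L}$ to $\Phi(t \ot f \ot l)(h)$, invoke the comodule map property $f(k)_0 \ot f(k)_1 = f(k^1) \ot k^2$ for $f \in \Hom(H,W)^H$, and expand $\Delta(S^2(t_1) h S^{-2}(l_1))$ using the identities $\Delta \circ S^{\pm 2} = (S^{\pm 2} \ot S^{\pm 2}) \circ \Delta$. After regrouping with the iterated Sweedler expansions of the $T$- and $L$-coactions (and the iterated $\Delta$ on $H$), the result collapses to $\Phi(t \ot f \ot l)(h^1) \ot h^2$. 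The antipode powers $S^{\pm 2}$ in the formula, and the corresponding twists $T^{S^2}$ and $L^{S^{-2}}$ on the source, are what make these Hopf identities produce the needed cancellations.

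Second I would construct the inverse explicitly. Fix bases $\{t_i\}$ of $T$ and $\{l_j\}$ of $L$; any $\phi \in \Hom(H, T \ot W \ot L)$ has a unique expansion $\phi(h) = \sum_{i,j} t_i \ot \phi_{ij}(h) \ot l_j$ with $\phi_{ij} \in \Hom(H,W)$. Define $\Psi(\phi) = \sum_{i,j} t_i \ot \widetilde{\phi}_{ij} \ot l_j$ by $\widetilde{\phi}_{ij}(h) = \phi_{ij}(S^{-2}(t_{i,1}) h S^{2}(l_{j,1}))$. A computation dual to the first stage, using that $\phi$ itself is an $H$-comodule map, shows each $\widetilde{\phi}_{ij} \in \Hom(H,W)^H$; the relations $\Phi \circ \Psi = \mathrm{Id}$ and $\Psi \circ \Phi = \mathrm{Id}$ then follow from $S^{\pm 2} \circ S^{\mp 2} = \mathrm{Id}$ and the counit axioms.

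Third I would check that $\Phi$ is a morphism in $\contra$. The target carries the contraaction on $\Hom(H, T \ot W \ot L)^H = \widehat{T \ot W \ot L}$ defined by $\alpha \theta(h) = \theta_{h^1}(h^2)$ as in \eqref{contraaction}. The source carries the contraaction induced on $T^{S^2} \ot \Hom(H,W)^H \ot L^{S^{-2}}$ by applying \eqref{contratensorleft} to the left factor $T^{S^2}$ (whose coaction is $t \mapsto t_0 \ot S^2(t_1)$) and \eqref{contratensorright} to the right factor $L^{S^{-2}}$. The two contraactions intertwine because the $S^{\pm 2}$ twists on the source factors mirror those appearing in the formula for $\Phi$; this is a direct Sweedler chase. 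Naturality in $W$ is immediate from the formula, and naturality in the finite-dimensional comodules $T$ and $L$ follows from functoriality of the tensor coaction. The main obstacle is the first stage, where three layers of coassociativity (for the coactions on $T$ and $L$, and for $\Delta$ on $H$) must be juggled while tracking the antipode powers; once the correct iterated Sweedler identity is assembled, the remaining stages proceed by the same pattern.
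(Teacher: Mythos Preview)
Your direct-verification strategy is sound and genuinely different from the paper's. Rather than checking the explicit formula, the paper factors the isomorphism through a chain of structural steps: it identifies $\Hom(H,V)^H$ with $\Hom_{H^*}(k,\Hom^L(H,V))$, proves two elementary ``swap'' isomorphisms that move a finite-dimensional comodule past $\Hom^L(H,W)$ (one the identity on tensors, the other $f\ot t\mapsto t_0\ot f(t_1-)$), and then invokes rigidity of $\Mc^H_{fd}$ together with $T^{**}\simeq T^{S^2}$ to produce the $S^{\pm 2}$ twists. Because each swap is simultaneously an isomorphism of $H^*$-modules and of $H$-contramodules, compatibility with $\contra$ is automatic, and the explicit formula only appears at the very end by tracing the composite. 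Your route is more elementary and self-contained but hides the conceptual origin of the $S^{\pm 2}$ (double duals) and forces a separate Sweedler check of the contramodule axiom.

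One point needs repair. Your inverse $\Psi$ is not correct as written: the outer tensor slots should carry $(t_i)_0$ and $(l_j)_0$ rather than $t_i$ and $l_j$ (otherwise the Sweedler legs $t_{i,1}$, $l_{j,1}$ appearing inside $\widetilde{\phi}_{ij}$ are left dangling), and the antipode powers in the argument should be $S$ and $S^{-1}$, not $S^{-2}$ and $S^{2}$. Concretely, the inverse of the shift $t\ot f\ot l\mapsto t_0\ot f(S^2(t_1)\,-\,S^{-2}(l_1))\ot l_0$ is $t\ot g\ot l\mapsto t_0\ot g(S(t_1)\,-\,S^{-1}(l_1))\ot l_0$, since $S(h^2)S^2(h^1)=S\big(S(h^1)h^2\big)=\epsilon(h)$ and dually for the $L$ factor. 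With this correction your three-stage plan goes through.
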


\begin{proof}
Recall that $\Hom^L(H,W)$ has a left $H^*$-action and a right $H$-contraaction and they commute.  Namely, $$(\chi\cdot f)(h)=f(h^1)_0\chi(f(h^1)_1 S(h^2))$$ and $$\alpha(h\mapsto\theta_h(-))=\{h\mapsto\theta_{h^1}(h^2)\}.$$

One quickly checks that the map $$\Hom^L(H, T\ot W)\to\overline{T}\ot\Hom^L(H,W)$$ \begin{equation}\label{iso1}t\ot f\mapsto t\ot f\end{equation} is an isomorphism of both $H^*$-modules and $H$-contramodules, where $\overline{T}$ has the usual $H^*$ structure, but is considered trivial for the purposes of defining the $H$-contraaction on the right hand side.

On the other hand $$\Hom^L(H,W)\ot\overline{T}\to\underline{T}\ot\Hom^L(H,W)$$ \begin{equation}\label{iso2}f\ot t\mapsto t_0\ot f(t_1-)\end{equation} is also an isomorphism of both structures where $\underline{T}$ has trivial $H^*$ structure but non-trivially modifies the contraaction on the right hand side.

So as $H$-contramodules we have: \begin{align*} \Hom(H,T\ot W)^H&\simeq\Hom_{H^*}(k,\Hom^L(H,T\ot W))\\
&\simeq\Hom_{H^*}(k,\overline{T}\ot\Hom^L(H,W))\\
&\simeq\Hom_{H^*}(k,\Hom^L(H,W)\ot\overline{T^{S^2}})\\
&\simeq\Hom_{H^*}(k,\underline{T^{S^2}}\ot\Hom^L(H,W))\\
&\simeq T^{S^2}\ot\Hom(H,W)^H
\end{align*} where $\Hom_{H^*}(k, T\ot V)\simeq \Hom_{H^*}(k, V\ot T^{S^2})$ is due to the rigidity of $\Mc^H_{fd}$ and the isomorphism $T^{**}\simeq T^{S^2}$.

Analogously we have:   \begin{align*}\Hom(H,W\ot L)^H&\simeq\Hom_{H^*}(k,\Hom^R(H,W\ot L))\\
&\simeq\Hom_{H^*}(k,\Hom^R(H,W)\ot\overline{L})\\
&\simeq\Hom_{H^*}(k,\overline{L^{S^{-2}}}\ot\Hom^R(H,W))\\
&\simeq\Hom_{H^*}(k,\Hom^R(H,W)\ot\underline{L^{S^{-2}}})\\
&\simeq \Hom(H,W)^H\ot L^{S^{-2}}.
\end{align*}

In the latter we have used the analogues of \eqref{iso1} and \eqref{iso2}; namely the isomorphisms: $$\Hom^R(H,W\ot L)\to\Hom^R(H,W)\ot\overline{L}$$ $$f\ot l\mapsto f\ot l$$ and $$\overline{L}\ot\Hom^R(H,W)\to \Hom^R(H,W)\ot \underline{L}$$ $$l\ot f\mapsto f(-l_1)\ot l_0.$$

The result now follows after tracing through the isomorphisms.
\end{proof}

Denote by $\Mc^{H\#}$ the $\Mc^H_{fd}$ bimodule category with $$T\cdot M\cdot L=T\ot M\ot L^{S^2}$$ and by ${^{\#}\contra}$ the $\Mc^H_{fd}$ bimodule category with $$T\cdot N\cdot L=T^{S^2}\ot N\ot L$$ then we immediately obtain the following as a Corollary of Proposition \ref{bimoduleprop}:

\begin{corollary}\label{adjunctionscor}
The functors $$\widehat{(-)}: \Mc^{H\#}\to {^{\#}\contra}$$  and $$(-)':{^{\#}\contra}\to\Mc^{H\#}$$ are bimodule functors over $\Mc^H_{fd}$ and so induce functors between the corresponding centers of bimodule categories.
\end{corollary}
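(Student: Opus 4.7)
Proof plan. The plan is to deduce both statements directly from Proposition \ref{bimoduleprop}; the twisted bimodule structures on $\Mc^{H\#}$ and ${^{\#}\contra}$ are designed precisely to absorb the $S^{\pm 2}$ twists appearing in that proposition. In particular, substituting $L\mapsto L^{S^2}$ into Proposition \ref{bimoduleprop} yields
$$\widehat{T\ot W\ot L^{S^2}}\simeq T^{S^2}\ot\widehat{W}\ot L,$$
whose left-hand side is $\widehat{T\cdot W\cdot L}$ computed in $\Mc^{H\#}$ and whose right-hand side is $T\cdot\widehat{W}\cdot L$ computed in ${^{\#}\contra}$. This is the desired bimodule-compatibility constraint for $\widehat{(-)}$.

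Next I would verify the remaining data. Naturality in $T$, $W$, $L$ is inherited verbatim from the proposition. The coherence axioms required of a bimodule functor (compatibility with the associativity and unit constraints of the $\Mc^H_{fd}$-actions) can be checked by tracing through the building blocks \eqref{iso1}, \eqref{iso2} and their $\Hom^R$-analogues used in the proof of Proposition \ref{bimoduleprop}: each is a composition of tensor-product and rigidity isomorphisms in $\Mc^H_{fd}$, for which coherence is automatic.

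For $(-)':{^{\#}\contra}\to\Mc^{H\#}$, rather than redoing the computation I would invoke the adjunction $(-)'\dashv\widehat{(-)}$ together with the rigidity of $\Mc^H_{fd}$ established in Section \ref{comodules} (via $T^{**}\simeq T^{S^2}$ and ${^{**}T}\simeq T^{S^{-2}}$). The strong bimodule constraints for the right adjoint $\widehat{(-)}$ transport to strong bimodule constraints for the left adjoint $(-)'$ by the standard mate correspondence; the passage from lax to strong relies precisely on the fact that the acting objects $T,L\in\Mc^H_{fd}$ admit two-sided duals. Alternatively, one could verify this by hand from the cokernel description of $N'$ together with the twisted $H$-action \eqref{notsoweird action}, but the mate argument is more conceptual.

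Finally, the induced functors on centers come from the general fact that any bimodule functor $F$ between $\Mc^H_{fd}$-bimodule categories sends a half-braided object $(M,\tau)$ to $(F(M),F(\tau))$, where the new half-braiding is obtained by conjugating $\tau$ by the bimodule structure isomorphisms of $F$, and this assignment is functorial. Applied to $\widehat{(-)}$ and $(-)'$ this yields the induced functors between $\Zc_{\Mc^H_{fd}}(\Mc^{H\#})$ and $\Zc_{\Mc^H_{fd}}({^{\#}\contra})$. The substantive content of the corollary is Proposition \ref{bimoduleprop} itself, so the main obstacle has already been overcome; everything that remains is formal bookkeeping.
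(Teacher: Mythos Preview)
Your proposal is correct and follows essentially the same route as the paper: the paper's own proof simply says that the claim for $\widehat{(-)}$ is immediate from Proposition~\ref{bimoduleprop}, and that the claim for $(-)'$ follows by ``adjunction juggling'' using the rigidity of $\Mc^H_{fd}$, which is exactly your mate-correspondence argument. Your write-up just unpacks these two sentences in more detail, including the explicit substitution $L\mapsto L^{S^2}$ and the general remark on induced functors on centers.
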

\begin{proof}
The claim about $\widehat{(-)}$ is immediate from Proposition \ref{bimoduleprop}.  Since $\Mc^H_{fd}$ is rigid, the statement about $(-)'$ follows from the one about $\widehat{(-)}$ through adjunction juggling, since they are adjoint functors.

\end{proof}

\begin{remark}\label{explicit}
The adjunction manipulations mentioned in the proof of Corollary \ref{adjunctionscor} can be traced through to obtain an explicit analogue of Proposition \ref{bimoduleprop} for the functor $N\mapsto N'$.  Namely, for $N\in\contra$ and $T,L\in\Mc^H_{fd}$ we have a natural isomorphism in $\Mc^H$: $$H\odot_H(T\ot N\ot L)\simeq T^{S^{-2}}\ot(H\odot_H N)\ot L^{S^2}$$ $$h\ot t\ot n\ot l\mapsto t_0\ot S^{-1}(t_1)hS(l_1)\ot n\ot l_0.$$
\end{remark}

As in \cite{hks}, we have central interpretations of $aYD$ objects.

\begin{lemma}\label{comodulecenter}
The center of $\Mc^{H\#}$ is equivalent to the category of anti-Yetter-Drinfeld modules, namely $$\Zc_{\Mc^H_{fd}}(\Mc^{H\#})\simeq aYD\text{-mod}.$$
\end{lemma}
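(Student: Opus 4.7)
The plan is to adapt the proof of Proposition~\ref{aydprop2} to the tensor-product presentation of the center.  There, the central data consisted of natural isomorphisms $\Hom^l(W,M)\simeq\Hom^r(W,M)$; here the analogous data for $\Zc_{\Mc^H_{fd}}(\Mc^{H\#})$ are natural isomorphisms $\sigma_T:T\ot M\to M\ot T^{S^2}$ for $T\in\Mc^H_{fd}$.  The two viewpoints are Yoneda-dual when restricted to finite-dimensional comodules, and the $S^2$ twist on the right factor of $\Mc^{H\#}$ plays the role here that the asymmetry between $\Hom^l$ and $\Hom^r$ did there, so the arguments should run in parallel.

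For the forward direction, given an $aYD$-module $M$, I would set $\sigma_T(t\ot m)=t_1 m\ot t_0$, with inverse $\sigma_T^{-1}(m\ot t)=t_0\ot S(t_1)m$ (the antipode axiom $S(h^1)h^2=\epsilon(h)$, combined with coassociativity and counitality, makes the two composites the identity).  The key point is that $\sigma_T$ lands in $M\ot T^{S^2}$ as a morphism in $\Mc^H$: this is exactly where the $aYD$-module condition \eqref{AYD1} enters, with the $S^2$ twist on the target absorbing the antipode-twisted terms produced by \eqref{AYD1}, in a computation essentially identical to the verification of diagram \eqref{coco}.  Naturality in $T$ is formal, and the hexagon coherence for $\sigma_{T\ot T'}$ reduces to associativity of the $H$-action, using the fact that the multiplication $T\ot T'\to T\cdot T'\subset H$ is a map of $H$-comodules because $\Delta$ is an algebra map.

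For the reverse direction, the mild obstacle is that $H$ itself is generally not in $\Mc^H_{fd}$, so no single $\sigma_H$ is given.  However, by the fundamental theorem of coalgebras every $h\in H$ lies in a finite-dimensional sub-$H$-comodule $T\subset H$.  Given $(M,\sigma)\in\Zc_{\Mc^H_{fd}}(\Mc^{H\#})$, define
\[h\cdot m:=(Id_M\ot\epsilon)\sigma_T(h\ot m)\]
for any such $T\ni h$; well-definedness follows from naturality of $\sigma$ under inclusions $T\subset T'$.  Unit coherence gives $1\cdot m=m$; the hexagon coherence applied to finite-dimensional sub-comodules $T\ni h$, $T'\ni h'$ and the product sub-comodule $T\cdot T'\ni hh'$, together with the fact that $\epsilon$ is an algebra map, yields associativity $(hh')\cdot m=h\cdot(h'\cdot m)$ upon applying $\epsilon\ot\epsilon$; and the assertion that $\sigma_T$ takes values in $M\ot T^{S^2}$ rather than $M\ot T$, specialized to $T\subset H$ and unpacked via coassociativity, is precisely the $aYD$-module identity \eqref{AYD1}.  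Mutual invertibility of the two constructions follows as in Proposition~\ref{aydprop2} (compare \eqref{tau}), and functoriality in morphisms is immediate; the only technical subtlety throughout is the bookkeeping of the $S^2$ twist, which is handled by coassociativity and \eqref{AYD1} exactly as in the verification of \eqref{coco}.
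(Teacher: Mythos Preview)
Your proposal is correct and follows essentially the same approach as the paper's own proof: both directions use the central isomorphism $\sigma_T(t\ot m)=t_1 m\ot t_0$ with inverse $m\ot t\mapsto t_0\ot S(t_1)m$, and in the converse direction the $H$-action is recovered as $(Id\ot\epsilon)\circ\sigma_T$ by passing to finite-dimensional sub(co)algebras of $H$, with naturality guaranteeing well-definedness. Your write-up is in fact somewhat more detailed than the paper's sketch (which explicitly defers to Proposition~\ref{aydprop2}), but the strategy and all key formulas coincide.
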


\begin{proof}
The proof proceeds very much like that of Proposition \ref{aydprop2} and so we provide only a sketch.  Let $M\in{_H\Mc^H}$, i.e., it is both a left module and a right comodule, and let $T\in\Mc^H_{fd}$.  Consider the map $$\tau:T\ot M\to M\ot T^{S^2}$$ $$t\ot m\mapsto t_1 m\ot t_0.$$  It is an isomorphism with inverse $m\ot t\mapsto t_0\ot S(t_1)m$.  It is a map in $\Mc^H$ if and only if $M\in aYD\text{-mod}$.  It is immediate that $(M,\tau)\in \Zc_{\Mc^H_{fd}}(\Mc^{H\#})$.

Conversely, let $M\in\Mc^H$ such that we have natural isomorphisms $\tau_T:T\ot M\to M\ot T^{S^2}$ for all $T\in \Mc^H_{fd}$.  Now proceed in a by now familiar fashion.  We need an action $H\ot M\to M$ which we obtain via a limit over finite dimensional subcoalgebras $C\subset H$, i.e., \begin{align*}\Hom(H\ot M,M)&=\Hom((\varinjlim C)\ot M,M)\\&=\Hom(\varinjlim (C\ot M),M)\\&=\varprojlim\Hom(C\ot M,M)\end{align*} and the latter contains $(Id\ot\epsilon_C)\circ\tau_C$.
\end{proof}

\begin{remark}
Note that what these limit arguments demonstrate is that in contrast to the $H$-module case considered in \cite{sk}, the $H$-comodule case is much easier as it reduces to the rigid category $\Mc^H_{fd}$.  More precisely, Proposition \ref{aydprop2} shows that $\Zc_{\Mc^H}(\Mc^{H op})\simeq aYD\text{-mod}$ by essentially showing that $\Zc_{\Mc^H}(\Mc^{H op})\simeq \Zc_{\Mc^H_{fd}}(\Mc^{H op})$, but the latter is clearly $\Zc_{\Mc^H_{fd}}(\Mc^{H\#})$, which as we saw above is equivalent to $\Zc_{\Mc^H}(\Mc^{H\#})$.
\end{remark}

\begin{lemma}\label{contramodulecenter}
The center of ${^{\#}\contra}$ is equivalent to the category of anti-Yetter-Drinfeld contramodules, namely $$\Zc_{\Mc^H_{fd}}({^{\#}\contra})\simeq aYD\text{-\contramod}.$$
\end{lemma}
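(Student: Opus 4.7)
The approach is to mirror the proof of Lemma \ref{comodulecenter} in the contramodule setting. In the forward direction, given an $aYD$-contramodule $N$ and $T \in \Mc^H_{fd}$, I would produce a natural isomorphism $\tau_T : T^{S^2} \ot N \to N \ot T$ expressing the central structure. Unlike the comodule case, the explicit formula for $\tau_T$ must involve not only the $H$-action on $N$ and the coaction of $T$ but also the contraaction $\alpha$ of $N$; this shape is forced by the representable case $N = \widehat{M}$, where Corollary \ref{adjunctionscor} shows that the central structure on $\widehat{M}$ must coincide with the image of $\tau^M_T$ (furnished by Lemma \ref{comodulecenter}) under $\widehat{(-)}$, transported through the isomorphisms of Proposition \ref{bimoduleprop}. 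Working out that conjugation produces a formula in which the pointwise evaluation of $f \in \widehat{M}$ gets replaced by $\alpha$ applied to an appropriate auxiliary map $H \to N$, and the same prescription makes sense for arbitrary $aYD$-contramodules. I would then check naturality in $T$, invertibility (using $S^{-1}$), and the hexagon and unit axioms directly.

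For the converse, given a central object $(N, \tau) \in \Zc_{\Mc^H_{fd}}({^{\#}\contra})$, I would extract an $H$-action on $N$ by a limit argument over finite-dimensional subcoalgebras $C \sbs H$, exactly as at the end of Lemma \ref{comodulecenter}. The composite $\tau_C : C^{S^2} \ot N \to N \ot C$ followed by $Id_N \ot \epsilon_C$ yields a map $C \ot N \to N$, and coherence as $C$ varies assembles these into the required $H \ot N \to N$ via $\Hom(H \ot N, N) = \varprojlim_C \Hom(C \ot N, N)$. Associativity and unitality of the resulting $H$-action then follow from the hexagon and unit constraints on $\tau$, and showing that the two constructions are mutually inverse is routine once the formula for $\tau_T$ is in hand.

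The main technical obstacle is verifying that the compatibility of $\tau_T$ with the contraactions on both sides (formulas \eqref{contratensorleft} and \eqref{contratensorright}) is equivalent to the $aYD$-contramodule identity \eqref{aydeq}. Unwinding the contramodule-morphism square for $\tau_T$ using coassociativity of $T$'s coaction (specifically $t_{0,0} \ot t_{0,1} \ot t_1 = t_0 \ot \Delta(t_1)$) and the antipode axioms reduces the compatibility to a Sweedler identity that, after applying $\alpha$ at appropriate arguments, matches \eqref{aydeq}. This is the step where the $aYD$-contramodule condition is both forced in the forward direction and recovered from the central-structure axioms in the converse, and it is the point at which the extra role of $\alpha$ (absent in the comodule analogue) becomes essential.
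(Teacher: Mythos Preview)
Your overall architecture is right, but there is a genuine misconception in the forward direction. You claim that the formula for $\tau_T$ must involve the contraaction $\alpha$ of $N$, and you motivate this by the representable case $N=\widehat{M}$. That is not what happens. The paper's central structure is simply
\[
\tau_T:\; T^{S^2}\ot N \longrightarrow N\ot T,\qquad t\ot n\longmapsto t_1 n\ot t_0,
\]
built from the $H$-action on $N$ and the coaction on $T$ alone --- exactly the same formula as in Lemma~\ref{comodulecenter}. If you actually carry out the transport you describe (this is the computation done in the proof of Theorem~\ref{summarythm}), the conjugate of $\tau^M_T$ under the isomorphisms of Proposition~\ref{bimoduleprop} is $t\ot g\mapsto t_1\cdot g\ot t_0$, where $\cdot$ is the $H$-action \eqref{weird action} on $\widehat{M}$; no $\alpha$ appears in the formula for $\tau$. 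The contraaction enters only in the \emph{sense} in which $\tau_T$ is a morphism: the condition that $\tau_T$ intertwines the contraactions \eqref{contratensorleft} and \eqref{contratensorright} on $T^{S^2}\ot N$ and $N\ot T$ is precisely the $aYD$-contramodule identity \eqref{aydeq}. You identify this correctly as the main technical point, but you have misplaced where $\alpha$ shows up.

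So the fix is minor: define $\tau_T$ by the same action formula as in the module case, and then the verification you describe in your last paragraph (that $\tau_T$ being a map in $\contra$ unwinds to \eqref{aydeq}) goes through. Your converse and the limit argument over finite-dimensional subcoalgebras are fine and match the paper's ``repeat verbatim'' instruction.
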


\begin{proof}
Repeat the proof of Lemma \ref{comodulecenter} verbatim with the exception that $$\tau: T^{S^2}\ot N\to N\ot T$$ $$t\ot n\mapsto t_1 n\ot t_0$$ is a map in $\contra$ if and only if $N\in aYD\text{-\contramod}$.
\end{proof}

We summarize this section with the following Theorem.

\begin{theorem}\label{summarythm} The following diagram commutes:
$$\xymatrix{\Zc_{\Mc^H_{fd}}(\Mc^{H\#})\ar@<+.5ex>[rr]^{\Zc_{\Mc^H_{fd}}(\widehat{(-)})} & & \Zc_{\Mc^H_{fd}}({^{\#}\contra})\ar@<+.5ex>[ll]^{\Zc_{\Mc^H_{fd}}((-)')}\\
aYD\text{-mod}\ar[u]^{Lemma\,\, \ref{comodulecenter}}_\simeq\ar@<+.5ex>[rr]^{\widehat{(-)}_H} & & aYD\text{-\contramod}\ar@<+.5ex>[ll]^{(-)_H'}\ar[u]^\simeq_{Lemma\,\, \ref{contramodulecenter}}\\
}$$ Recall that for $M\in aYD\text{-mod}$ we equip $\widehat{M}$ with \eqref{contraaction} and \eqref{weird action}, whereas for $N\in aYD\text{-\contramod}$ we equip $N'$ with \eqref{coaction} and \eqref{notsoweird action}.
\end{theorem}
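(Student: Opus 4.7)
The plan is to verify commutativity of the diagram by checking, for each of the two horizontal pairs of functors, that the two paths from $aYD\text{-mod}$ (respectively $aYD\text{-\contramod}$) to the upper center category produce the same central structure on the same underlying object. Both $\widehat{(-)}_H$ and $\Zc(\widehat{(-)})\circ\{\text{Lemma }\ref{comodulecenter}\}$ give $\widehat{M}$ as the same underlying contramodule (the former by construction in Section \ref{functorsec}, the latter because the bimodule functor $\widehat{(-)}$ from Corollary \ref{adjunctionscor} is just the unadorned $\widehat{(-)}$ of \cite{pos} on objects, with bimodule coherence supplied by Proposition \ref{bimoduleprop}); analogously for $N\mapsto N'$ on the right. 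Hence what must be checked is (i) that the induced central structure on $\widehat{M}$ corresponds, under Lemma \ref{contramodulecenter}, to the $H$-action \eqref{weird action}; and (ii) that the induced central structure on $N'$ corresponds, under Lemma \ref{comodulecenter}, to the $H$-coaction \eqref{coaction} together with the action \eqref{notsoweird action}.

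For (i), I would unpack the central structure as follows. Given $M\in aYD\text{-mod}$ with $\tau_M(t\ot m)=t_1 m\ot t_0$ per Lemma \ref{comodulecenter}, the bimodule functor produces the central structure $\tilde\tau_T:T^{S^2}\ot\widehat{M}\to\widehat{M}\ot T$ characterized by the square with vertical legs the isomorphisms of Proposition \ref{bimoduleprop} (with $L=k$ on the left and $T=k,L=T^{S^2}$ on the right) and top leg $\widehat{\tau_M}$. To extract the $H$-action via Lemma \ref{contramodulecenter}, take the limit over finite dimensional subcoalgebras $C\subset H$ of $(Id\ot\epsilon_C)\circ\tilde\tau_C$, evaluated on $c\ot\phi$; tracing through the intermediate maps \eqref{iso1}, \eqref{iso2} and their right-sided analogues, and using that the coaction on $C\subset H$ is $\Delta$, one gets $c\cdot\phi = c^2\,\phi(S(c^3)\,-\,c^1)$, which is precisely \eqref{weird action}. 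The compatibility of this action with the contraaction \eqref{contraaction} is then the $aYD$-contramodule condition and recovers the central structure of Lemma \ref{contramodulecenter}.

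For (ii), rather than repeating an equally tedious computation, I would exploit Corollary \ref{adjunctionscor}: since $\widehat{(-)}$ and $(-)'$ are bimodule adjoints, the functors they induce on centers are again adjoint, and the equivalences of Lemmas \ref{comodulecenter}, \ref{contramodulecenter} intertwine these adjunctions with the $aYD$-module/contramodule ones by the very construction of the action/coaction out of the central structure. Given the commutativity of the diagram of right adjoints just established in part (i), commutativity of the diagram of left adjoints $(-)'$ follows formally. Alternatively, a direct check using the explicit formula of Remark \ref{explicit} applied to $C^{S^{-2}}\ot(H\odot_H N)\ot k$ recovers \eqref{coaction} and \eqref{notsoweird action} after composing with $\epsilon_C$ and passing to the limit. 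The main obstacle is part (i): Proposition \ref{bimoduleprop} is assembled from several intermediate isomorphisms involving rigidity of $\Mc^H_{fd}$ and $S^{\pm 2}$-twists, so tracing through $\widehat{\tau_M}$ and reading off the induced action requires careful Sweedler bookkeeping. The $S^2$-twist in the bimodule structures $\Mc^{H\#}$ and ${^\#\contra}$ is precisely engineered so that this chase produces \eqref{weird action} with no residual antipode discrepancies.
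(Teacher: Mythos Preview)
Your plan is correct and, for part (i), is essentially the paper's own argument: the paper traces $\tau_M:t\ot m\mapsto t_1 m\ot t_0$ through $\widehat{(-)}$ and the explicit isomorphism of Proposition~\ref{bimoduleprop} to land on $t\ot g\mapsto t_1\cdot g\ot t_0$ with $\cdot$ given by \eqref{weird action}, which is exactly the central structure of Lemma~\ref{contramodulecenter}. Your detour through the limit extraction of the action is unnecessary but harmless --- once the transported $\tilde\tau$ is seen to have this form you are already done.

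For part (ii) you diverge from the paper. The paper does \emph{not} invoke the adjointness shortcut; it repeats the direct computation, now using the explicit isomorphism of Remark~\ref{explicit}: the central structure $t\ot n\mapsto t_1 n\ot t_0$ on $N$ is pushed through $(-)'$ and identified with $t\ot(x\ot n)\mapsto t_1\cdot(x\ot n)\ot t_0$ for the action \eqref{notsoweird action}. Your adjointness argument is a legitimate alternative and buys you part (ii) for free once (i) is done, at the cost of only obtaining commutativity up to natural isomorphism and of relying on the (true but not yet stated in the paper at this point) fact that $((-)_H',\widehat{(-)}_H)$ form an adjoint pair --- in the paper this adjunction is recorded as a \emph{consequence} of the theorem, so if you go this route you should instead argue that the transported left adjoint is uniquely determined by (i) and then observe it agrees with the explicit $(-)_H'$. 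Your fallback ``alternatively, a direct check using Remark~\ref{explicit}'' is precisely what the paper does.
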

\begin{proof}
For the $\widehat{(-)}$ case we have the map $T\ot M\to M\ot T^{S^2}$ with $t\ot m\mapsto t_1m\ot t_0$ mapping to $\Hom(H,T\ot M)^H\to\Hom(H,M\ot T^{S^2})^H$ with $t\ot f\mapsto t_1f\ot t_0$ which maps under the identification of Proposition \ref{bimoduleprop} to $T^{S^2}\ot \Hom(H,M)^H\to \Hom(H,M)^H\ot T$ with $t_0\ot f(S^2(t_1)-)\mapsto t_2f(-t_1)\ot t_0$ and the latter coincides with $t\ot g\mapsto t_1\cdot g\ot t_0$.

For the adjoint $(-)'$ we have $T^{S^2}\ot N\to N\ot T$ with $t\ot n\mapsto t_1n\ot t_0$ mapping to $H\odot_H(T^{S^2}\ot N)\to H\odot_H(N\ot T)$ with $h\ot t\ot n\mapsto h\ot t_1n\ot t_0$ which identifies with $T\ot(H\odot_H N)\to (H\odot_H N)\ot T^{S^2}$ with $t_0\ot S(t_1)h\ot n\mapsto hS(t_1)\ot t_2 n\ot t_0$ under the isomorphisms of Remark \ref{explicit} and the latter coincides with $t\ot (x\ot m)\mapsto t_1\cdot(x\ot m)\ot t_0$.
\end{proof}

In the end we see that $((-)_H',\widehat{(-)}_H)$ is an adjoint pair between $aYD\text{-mod}$ and $aYD\text{-\contramod}$ extending the result of \cite{pos}.

\subsubsection{Stability.}\label{stabilitysection}
Recall that in order to obtain cyclic cohomology we need to consider the coefficients in \emph{stable} anti-Yetter-Drinfeld modules or contramodules.  We now address the preservation of the stability conditions under the adjoint pair of functors of the previous section.

Recall the map $$\sigma_M: M\to M$$ $$m\mapsto m_1m_0$$ with the inverse $m\mapsto S^{-1}(m_1)m_0$; it defines an element $\sigma\in\Aut(Id_{aYD\text{-mod}})$.  Similarly, there is a $$\sigma_N: N\to N$$ $$n\mapsto\alpha(r_n)$$ with the inverse $n\mapsto\alpha(h\mapsto S^{-1}(h)n)$; it defines an element $\sigma\in\Aut(Id_{aYD\text{-\contramod}})$.

It is an easy calculation to see that $\widehat{\sigma_M}:\widehat{M}\to\widehat{M}$ coincides with $\sigma_{\widehat{M}}:\widehat{M}\to\widehat{M}$ and also $(\sigma_N)'=\sigma_{N'}$.  For example to prove the latter equality observe that the left hand side is $h\ot n\mapsto h\ot\alpha(r_n)=h^2\ot r_n(h^1)=h^2\ot h^1 n=(h\ot n)_1(h\ot n)_0$ which is the right hand side.

Recall that $saYD\text{-mod}$ is the full subcategory of $aYD\text{-mod}$ that consists of $M$ such that $\sigma_M=Id_M$.  The definition of $saYD\text{-\contramod}$ is identical.  We have proved the following Corollary to Theorem \ref{summarythm}:

\begin{corollary}
The functors $((-)_H',\widehat{(-)}_H)$ is an adjoint pair between $saYD\text{-mod}$ and $saYD\text{-\contramod}$.
\end{corollary}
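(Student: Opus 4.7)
The plan is short, since essentially all the work has already been done in the paragraphs immediately preceding the corollary. The statement asks us to restrict the adjunction $((-)_H', \widehat{(-)}_H)$ of Theorem~\ref{summarythm} from $aYD\text{-mod}$ and $aYD\text{-\contramod}$ down to the full subcategories $saYD\text{-mod}$ and $saYD\text{-\contramod}$ cut out by the condition $\sigma = \mathrm{Id}$.

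First I would observe that because $saYD\text{-mod} \subset aYD\text{-mod}$ and $saYD\text{-\contramod} \subset aYD\text{-\contramod}$ are \emph{full} subcategories, and because the adjunction between the ambient categories is already in hand, the only thing to verify is that each of the two functors sends stable objects to stable objects. Once this is done, the counit $(\widehat{M})'_H \to M$ and the unit $N \to \widehat{N'_H}$ of the ambient adjunction automatically restrict, since all objects involved land in the stable subcategories, and the natural bijection on $\Hom$-sets is simply inherited.

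Next I would invoke the two naturality identities established in Section~\ref{stabilitysection}: for every $M \in aYD\text{-mod}$ we have $\widehat{\sigma_M} = \sigma_{\widehat{M}}$, and for every $N \in aYD\text{-\contramod}$ we have $(\sigma_N)' = \sigma_{N'}$. Assuming $M$ is stable, so that $\sigma_M = \mathrm{Id}_M$, the functoriality of $\widehat{(-)}_H$ gives $\sigma_{\widehat{M}} = \widehat{\sigma_M} = \widehat{\mathrm{Id}_M} = \mathrm{Id}_{\widehat{M}}$, so $\widehat{M}$ is stable. Dually, if $N$ is stable then $\sigma_{N'} = (\sigma_N)' = (\mathrm{Id}_N)' = \mathrm{Id}_{N'}$, so $N'$ is stable.

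There is essentially no main obstacle here; the only place one could slip is in forgetting that $saYD\text{-mod}$ and $saYD\text{-\contramod}$ are defined as full subcategories, so that no compatibility check on morphisms is required and the restricted functors together with the restricted unit and counit automatically satisfy the triangle identities inherited from Theorem~\ref{summarythm}.
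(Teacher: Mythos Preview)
Your proposal is correct and follows essentially the same approach as the paper: the key identities $\widehat{\sigma_M}=\sigma_{\widehat{M}}$ and $(\sigma_N)'=\sigma_{N'}$ are established just before the corollary, and the restriction of the adjoint pair to the full stable subcategories is then immediate, exactly as you argue. The paper leaves the deduction implicit, while you have spelled out the fullness and unit/counit restriction reasoning, but there is no substantive difference.
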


\section{A comodule-contramodule correspondence.}\label{correspondence}
Here we will address the question of $\widehat{(-)}$ (equivalently $(-)'$) being an equivalence.  Note that in light of the preceding discussion if $\widehat{(-)}:\Mc^H\to\contra$ is an equivalence, then so is $\widehat{(-)}_H: aYD\text{-mod}\to aYD\text{-\contramod}$ and also $\widehat{(-)}_H: saYD\text{-mod}\to saYD\text{-\contramod}$.

As usual, let us consider $k$ as the trivial $H$-comodule, and let $J=\widehat{k}$ be its contramodule image under $\widehat{(-)}$.  Note that this is nothing but the two-sided ideal in $H^*$ consisting of right integrals \cite{integral}.  Namely, $\chi\in J$ if and only if we have $\chi(h^1)h^2=\chi(h)1$ for all $h\in H$.  Strictly speaking it is left integrals that are considered in \cite{integral} but if $\chi$ is a left integral then $\chi(S(-))$ is right and vice versa.  It is known \cite{hopf} that $\text{dim} J\leq 1$ and if $J\neq 0$ then $S$ is invertible, which we have been assuming anyhow.

\begin{remark}\label{bigk}
Dually, we may consider $k$ as the trivial contramodule, i.e., $\alpha: H^*\to k$ is evaluation at $1\in H$.  Let $K=k'$ and note that $K=H/I$ where $I$ is generated by $\mu(h^1)h^2-\mu(1)h$ for $\mu\in H^*$ and $h\in H$. Thus $K^*=I^\perp=\{\chi\in H^*|\mu(1)\chi(h)=\mu(h^1)\chi(h^2) \forall h\}$ and the latter is the ideal of left integrals.
\end{remark}

We are ready for the first negative result:
\begin{lemma}\label{neg1}
If $J=0$ then $\widehat{(-)}$ is not an equivalence.
\end{lemma}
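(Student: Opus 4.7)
The plan is to observe that $J$ is literally $\widehat{k}$ where $k$ is the trivial (one-dimensional) $H$-comodule, and then exploit the fact that equivalences of categories must reflect the zero object.

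First I would verify $\widehat{k} = J$ explicitly by unwinding the definition. For the trivial comodule $k$ we have $\Hom(H,k) = H^*$, and the coaction $\rho^l$ of \eqref{coactionleft} reduces to $\rho \chi(h) = \chi(h^1) \otimes S(h^2)$ since the $k$-component of the coaction is trivial. Thus $\chi \in \Hom(H,k)^H$ means $\chi(h^1) \otimes S(h^2) = \chi(h) \otimes 1$ in $H^* \otimes H$, which upon multiplying out (apply $S^{-1}$ to the $H$-factor and then use the antipode axiom in the opposite direction) is exactly the right integral condition $\chi(h^1) h^2 = \chi(h)\, 1_H$ defining $J$. Hence $\widehat{k} = J$ as contramodules.

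Next, since $k \neq 0$ in $\Mc^H$ (its identity endomorphism is nonzero), the hypothesis $J = 0$ gives $\widehat{k} = 0$. But an equivalence of categories is faithful, so it sends the nonzero morphism $\mathrm{Id}_k : k \to k$ to a nonzero morphism in $\contra$; however any morphism with target $\widehat{k} = 0$ is zero. This contradiction shows $\widehat{(-)}$ cannot be an equivalence. Equivalently, equivalences reflect isomorphisms and preserve nonzero objects, yet $\widehat{(-)}$ would collapse the nonzero object $k$ to $0$.

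There is no real obstacle here; the content of the lemma lies in recognizing the integral ideal $J$ as a very simple instance of the functor $\widehat{(-)}$, after which the categorical argument is immediate. The lemma is setting up motivation for the converse direction later, where the nontriviality of $J$ will be leveraged to produce the equivalence.
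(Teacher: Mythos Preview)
Your proof is correct and follows essentially the same approach as the paper: both observe that $\widehat{k}=J=0$ while $k\neq 0$, so $\widehat{(-)}$ cannot be an equivalence. Your explicit verification that $\Hom(H,k)^H$ coincides with the right integral ideal is not strictly needed since the paper has already \emph{defined} $J=\widehat{k}$ just before the lemma; the paper's proof additionally invokes Proposition~\ref{bimoduleprop} to note that in fact $\widehat{M}\simeq M^{S^2}\otimes J=0$ for every $M\in\Mc^H_{fd}$, but this stronger observation is a bonus rather than a logical necessity for the lemma itself.
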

\begin{proof}
Obviously we have that $\widehat{k}=0$, but furthermore, by Proposition \ref{bimoduleprop} we have that for $M\in\Mc^H_{fd}$, $\widehat{M}\simeq M^{S^2}\ot J=0$.
\end{proof}

On the other hand let us assume that $J\neq 0$.    Let $\contra_{rfd}$ denote the full subcategory of $\contra$ consisting of finite dimensional, rational contramodules.  By analogy with the $H^*$-module case, we say that a finite dimensional contramodule $M$ is \emph{rational} if the structure map $\alpha$ factors through $\Hom(C,M)$ for some $C$ a finite dimensional subcoalgebra of $H$.

\begin{lemma}\label{neg2}
Let $J\neq 0$ then $\widehat{(-)}:\Mc_{fd}^H\simeq\contra_{rfd}$.
\end{lemma}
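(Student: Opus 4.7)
The plan is to exhibit $(-)'$ as a quasi-inverse to $\widehat{(-)}$ on the indicated subcategories, combining Proposition \ref{bimoduleprop}, Remark \ref{explicit}, and a direct analysis of $J$ and $J'$. Applying Proposition \ref{bimoduleprop} with $W=k$ (the trivial comodule) and $L=k$ yields a natural isomorphism $\widehat{T}\simeq T^{S^2}\ot J$ in $\contra$ for each $T\in\Mc^H_{fd}$; since $\dim J=1$ (from $J\neq 0$ together with the bound $\dim J\leq 1$), this gives $\dim\widehat{T}=\dim T$, so $\widehat{T}$ is finite dimensional. Rationality of $\widehat{T}$ reduces to rationality of $J$, since tensoring with the finite-dim comodule $T^{S^2}$ via \eqref{contratensorleft} preserves rationality. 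To see $J$ itself is rational, for any $\mu\in H^*$ write $\theta_\mu(h)=\mu(h)\lambda$; then $\alpha_J(\theta_\mu)=c_\mu\lambda$ with $c_\mu=\mu(a)$ for the distinguished grouplike $a\in H$ (which exists as $H$ is co-Frobenius, i.e., as $J\neq 0$), so $\alpha_J$ factors through $\Hom(k\cdot a,J)$.

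The quasi-inverse is $(-)'$, and the crucial computation is $J'\simeq k$ as $H$-comodules. The candidate map $J'\to k$ sending $h\ot\lambda\mapsto\lambda(h)$ descends to the coequalizer $J'=H\odot_H J$ by the contraaction identity $\alpha_J(f)(h)=f(h^1)(h^2)$, and is surjective since $\lambda\neq 0$. The relation in $J'$ coming from $\theta_\mu$ with $\mu=\lambda$, combined with the right-integral identity $\lambda(h^1)h^2=\lambda(h)\cdot 1$, forces $\lambda(a)h\equiv\lambda(h)\cdot 1$ modulo the defining relations, so the quotient is at most one-dimensional, completing the isomorphism. With $J'\simeq k$ in hand, Remark \ref{explicit} applied with $T=M^{S^2}$, $N=J$, $L=k$ yields $\widehat{M}'\simeq(M^{S^2}\ot J)'\simeq M\ot J'\simeq M$ for $M\in\Mc^H_{fd}$; tracing the explicit isomorphisms identifies this composite with the adjunction counit $h\ot f\mapsto f(h)$, which is therefore an iso on $\Mc^H_{fd}$, establishing full faithfulness of $\widehat{(-)}$.

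For essential surjectivity onto $\contra_{rfd}$, given $N\in\contra_{rfd}$ one first shows $N'\in\Mc^H_{fd}$ by using the rationality of $N$ (factorization of $\alpha_N$ through $\Hom(C,N)$ for some finite-dim subcoalgebra $C\subset H$) to reduce the cokernel presentation of $H\odot_H N$ to finite-dim data, and then a parallel application of Remark \ref{explicit} identifies $\widehat{N'}\simeq N$ with the adjunction unit. The main obstacle is the bookkeeping needed to match the explicit formula-based isomorphisms with the abstract unit and counit of the adjunction, together with the finite-dimensionality argument for $N'$, which is the step most heavily dependent on rationality.
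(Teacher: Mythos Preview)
Your overall strategy---use the left adjoint $(-)'$ as an explicit quasi-inverse---is different from the paper's, and as written it has two genuine gaps.

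\medskip

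\textbf{The computation of $J'$.}  Your argument that $J'$ is at most one-dimensional hinges on the relation $\lambda(a)\,h\equiv\lambda(h)\cdot 1$ in $H\odot_H J$, but this only collapses the quotient if $\lambda(a)\neq 0$.  You assert this without justification.  The existence of the distinguished grouplike $a$ in a co-Frobenius Hopf algebra is fine, but the nonvanishing $\lambda(a)\neq 0$ is a separate statement that is not automatic and that you have not supplied.  (Compare the discussion immediately following this lemma in the paper, where the condition $\chi(1)\neq 0$ is singled out as genuinely restrictive; your $\lambda(a)\neq 0$ is of the same flavor.)

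\medskip

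\textbf{Essential surjectivity.}  Your proposed argument here is circular.  Remark~\ref{explicit} computes $(T\ot N\ot L)'$ for $T,L\in\Mc^H_{fd}$ and arbitrary $N\in\contra$; it says nothing about a general $N\in\contra_{rfd}$ unless you already know that $N$ can be written as $T\ot J$ for some finite-dimensional comodule $T$.  But that decomposition is precisely essential surjectivity of $\widehat{(-)}$, which is what you are trying to prove.  The sentence ``a parallel application of Remark~\ref{explicit} identifies $\widehat{N'}\simeq N$ with the adjunction unit'' therefore does not go through.  Similarly, the rationality hypothesis (factoring $\alpha_N$ through $\Hom(C,N)$) does not by itself bound $\dim(H\odot_H N)$, since the cokernel is a quotient of $H\ot N$, not of $C\ot N$.

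\medskip

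\textbf{How the paper proceeds.}  The paper avoids both issues by observing (Lemma~\ref{rationalcontra}) that a finite-dimensional rational contramodule \emph{is} a finite-dimensional comodule: the identification $\iota:\Mc^H_{fd}\to\contra_{rfd}$ is the identity on objects, sending a coaction $\rho$ to the contraaction $\chi\ot m\mapsto\chi(S^2(m_1))m_0$.  Under $\iota$, Proposition~\ref{bimoduleprop} says $\widehat{(-)}$ becomes $M\mapsto M^{S^2}\ot J$ inside $\Mc^H_{fd}$.  Since $J$ is one-dimensional and hence $\otimes$-invertible, the inverse is immediate: $(-\ot{^*J})^{S^{-2}}$.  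No computation of $J'$ and no separate essential-surjectivity argument is needed.  If you want to salvage your route, the missing ingredient is exactly this identification of $\contra_{rfd}$ with $\Mc^H_{fd}$; once you have it, the adjoint $(-)'$ is no longer the natural candidate for the inverse.
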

\begin{proof}
Again, for $M\in\Mc_{fd}^H$ we have that $\widehat{M}\simeq M^{S^2}\ot J$.  Note that by \cite{integral} the contramodule $J$ is rational and thus so is $\widehat{M}$.  On the other hand any rational finite dimensional contramodule is essentially a comodule (see Lemma \ref{rationalcontra}) and so $(-\ot{^*J})^{S^{-2}}$ is the inverse of $\widehat{(-)}$.
\end{proof}

The above Lemma should be considered as in general a negative result.  Namely, if exotic, i.e., non-rational contramodules are possible, then the equivalence fails.  More precisely, let us consider the possibility of exotic contramodule structures on $k$.  Let $\chi\in J$ and observe that $$\alpha(x\mapsto\alpha(y\mapsto\chi(xS(y))))=\alpha(h\mapsto\chi(h^1S(h^2)))=\alpha(h\mapsto\epsilon(h))\chi(1)=\chi(1).$$  Since by \cite{integral}, as $x$ ranges over $H$, the functional $\chi(xS(-))$ ranges over $H^{*rat}$ so if $$\chi(1)\neq 0$$ then $\exists\mu\in H^{*rat}$ such that $\mu\cdot 1=c\neq 0$.  So that for any $\eta\in H^*$ we have $\eta\cdot 1=\eta\mu\frac{1}{c}=\eta(\mu_1)\mu_0\frac{1}{c}$ and so the action of $H^*$ on $k$ factors through $C^*$ and the structure on $k$ is necessarily rational.  On the other hand if  $\chi(1)=0$ then it is possible that the whole of $H^{*rat}$ acts trivially without $H^*$ doing the same, resulting in an exotic structure.

This suggests two possibilities for $\widehat{(-)}$ being an equivalence: \begin{itemize}\medskip
\item $\exists\chi\in J$ with $\chi(1)\neq 0$.\smallskip
\item $H$ is finite dimensional.
\end{itemize}

Note that the second case may appear trivial at first, but it isn't.  It is true that there is no difference between $H$-comodules, $H^*$-modules and $H$-contramodules in the case when $H$ is finite dimensional.  However, we are not interested in the naive identification of the categories, rather the  $\widehat{(-)}$ one.  The latter functor is the one that translates the equivalence between comodules and contramodules to the equivalence between the $saYD$ versions that we need.  Of course given all the work already done on this matter, the conclusion is easy to obtain, so we start with this case.

\begin{proposition}\label{finitecase}
Let $H$ be finite dimensional, then $\widehat{(-)}$ is an equivalence, and so is $\widehat{(-)}_H$.
\end{proposition}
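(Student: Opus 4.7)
The plan is to exploit the finite-dimensional hypothesis to reduce to Lemma \ref{neg2} and then extend the equivalence to all objects by a filtered-colimit argument. When $H$ is finite dimensional, the classical theory of integrals (\cite{hopf}) guarantees that $J$ is one-dimensional, hence nonzero, so Lemma \ref{neg2} already yields an equivalence $\widehat{(-)}: \Mc^H_{fd} \simeq \contra_{rfd}$. Under the finite-dimensionality hypothesis, moreover, $\Hom(H, N) = H^* \otimes N$ canonically, so a contraaction is nothing but an $H^*$-module structure; thus $\contra \simeq {_{H^*}\Mc}$, while $\Mc^H \simeq {_{H^*}\Mc}$ by the usual identification. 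In both incarnations, every object is the filtered colimit of its finite dimensional subobjects.

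The key step is to verify that both the right adjoint $\widehat{(-)}$ and its left adjoint $(-)'$ preserve filtered colimits. The functor $(-)'$ does so automatically as a left adjoint. For $\widehat{(-)} = \Hom(H,-)^H$, the hom $\Hom(H,-)$ preserves filtered colimits because $H$ is finitely presentable in $k$-Vec, and the $H$-invariants functor $(-)^H$ is an equalizer of two linear maps, hence commutes with filtered colimits of vector spaces. Consequently the unit $\eta_N: N \to \widehat{N'}$ and counit $\epsilon_M: \widehat{M}' \to M$ of the adjunction are compatible with filtered colimits. Since they are isomorphisms on the finite dimensional subcategories by Lemma \ref{neg2}, and every object in either category is a filtered colimit of finite dimensional subobjects, $\eta$ and $\epsilon$ are natural isomorphisms throughout. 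This establishes that $\widehat{(-)}: \Mc^H \to \contra$ is an equivalence.

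For $\widehat{(-)}_H$, I would invoke the commutative diagram of Theorem \ref{summarythm}: the vertical arrows are equivalences by Lemmas \ref{comodulecenter} and \ref{contramodulecenter}, and by Corollary \ref{adjunctionscor} the top horizontal arrow is $\Zc_{\Mc^H_{fd}}(\widehat{(-)})$, i.e., the image of the bimodule functor $\widehat{(-)}$ on centers. Since $\widehat{(-)}$ is now known to be an equivalence compatible with the $\Mc^H_{fd}$-bimodule structure, the induced functor on centers is an equivalence as well, so $\widehat{(-)}_H: aYD\text{-mod} \to aYD\text{-\contramod}$ is an equivalence. Stability is preserved under this equivalence because the natural automorphisms $\sigma$ intertwine with $\widehat{(-)}_H$ by the computations at the end of Section \ref{stabilitysection}, so we also obtain $saYD\text{-mod} \simeq saYD\text{-\contramod}$.

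The only real obstacle is the filtered-colimit preservation of $\widehat{(-)}$, but this reduces to the finite presentability of $H$ together with the exactness of filtered colimits in $k$-Vec, so it is routine; the rest is formal bookkeeping through the adjunction and the center construction.
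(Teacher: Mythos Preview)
Your proposal is correct and follows essentially the same route as the paper: invoke $J\neq 0$ from the integral theory, use finite presentability of $H$ to push $\Hom(H,-)^H$ past filtered colimits, reduce to the finite-dimensional case (Lemma \ref{neg2}), and note that for finite-dimensional $H$ there are no exotic contramodules. The paper packages this by computing $\widehat{M}\simeq M^{S^2}\ot J$ directly, while you phrase it via the unit and counit of the adjunction; your treatment of $\widehat{(-)}_H$ through Theorem \ref{summarythm} is also exactly what the paper's opening remark of Section \ref{correspondence} has in mind.
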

\begin{proof}
From \cite{integral} we know that $J\neq 0$.  Furthermore, for $M\in\Mc^H$ we have $M=\varinjlim M_i$ with $M_i\in\Mc^H_{fd}$ so that $\widehat{M}=\Hom(H,M)^H=\Hom(H,\varinjlim M_i)^H$ which by the finite dimensionality of $H$ is $\varinjlim\Hom(H,M_i)^H\simeq\varinjlim(M_i^{S^2}\ot J)=M^{S^2}\ot J$.  Since there are no exotic contramodules here this proves the equivalence.
\end{proof}

Moving on to the first case we get by \cite{integral} that the $\chi(1)\neq 0$ condition is actually very strict.  Namely, we have that $H$ is such that as a coalgebra $H=\bigoplus_i C_i$ where $C_i$ are finite dimensional simple subcoalgebras. Let $\epsilon_i$ denote the counit of $C_i$ with $\epsilon=\sum\epsilon_i$.  For $x\in H$ let $x=\sum_i x_i$ denote its decomposition with respect to that of $H$.

\begin{theorem}\label{semisimple}
The category of $H$-comodules and $H$-contramodules are equivalent.  The former consists of $\bigoplus_i M_i$ and the latter of $\prod_i M_i$ where $M_i$ are right $C_i$-comodules, i.e.,  $M_i\in\Mc^{C_i}$.
\end{theorem}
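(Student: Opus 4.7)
The plan is to exploit the coalgebra decomposition $H=\bigoplus_i C_i$ to produce orthogonal central idempotents $\epsilon_i\in H^*$, decompose both $\Mc^H$ and $\contra$ along these idempotents, and conclude via Proposition \ref{finitecase} applied to each $C_i$. Since each $C_i$ is a subcoalgebra, $\Delta(C_i)\subset C_i\ot C_i$, which makes $\epsilon_i\ast\epsilon_j=\delta_{ij}\epsilon_i$ in the convolution algebra $H^*$ and each $\epsilon_i$ central. Each $C_i$ is finite dimensional, so Proposition \ref{finitecase} gives an equivalence $\Mc^{C_i}\simeq\widehat{\Mc^{C_i}}$, which is what will be combined on each factor.

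For the comodule side, each $M\in\Mc^H$ admits pairwise orthogonal idempotent endomorphisms $p_i:=(Id\ot\epsilon_i)\circ\rho$. Because $\rho(m)$ has finite support in $\bigoplus_i M\ot C_i$ by definition of a comodule, we obtain $\sum_i p_i(m)=m$ as a \emph{finite} sum, and thus $M=\bigoplus_i M_i$ with $M_i:=p_i(M)\in\Mc^{C_i}$. For the contramodule side, define $r_i:N\to N$ by $r_i(n):=\alpha(h\mapsto\epsilon_i(h)n)$. The axiom \eqref{assoccontra} combined with $\epsilon_i\ast\epsilon_j=\delta_{ij}\epsilon_i$ shows the $r_i$ are orthogonal idempotents, and the subspaces $N_i:=r_i(N)$ inherit $C_i$-contramodule structure (which, by finite dimensionality of $C_i$ and Lemma \ref{rationalcontra}, is the same as a $C_i$-comodule). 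The unit axiom \eqref{unitcontra} gives $n=\alpha(h\mapsto\epsilon(h)n)$, but now the ``sum'' $\sum_i\epsilon_i\cdot n$ lives naturally in $\Hom(H,N)=\prod_i\Hom(C_i,N)$ rather than in $N$ itself; this is the conceptual reason the contramodule decomposition is a \emph{product} rather than a sum. Explicitly, the map $N\to\prod_i N_i$ given by $n\mapsto(r_i(n))_i$ is inverted by sending $(n_i)_i$ to $\alpha(f)$, where $f\in\Hom(H,N)$ is the tuple whose $i$-th component is $c\mapsto\epsilon_i(c)n_i$. Conversely, any family $(N_i)_i$ of objects of $\Mc^{C_i}$ packages into an $H$-contramodule on $\prod_i N_i$ by defining $\alpha$ componentwise using the $C_i$-contraactions.

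To identify $\widehat{(-)}$ as the equivalence, I would then observe that for $M=\bigoplus_j M_j$ the $H$-comodule condition on $f\in\Hom(C_i,M)^H$ forces $f(C_i)\subset M_i$, so $\widehat{M}=\prod_i\Hom(C_i,M_i)^{C_i}$, which is exactly $\prod_i$ of the finite dimensional equivalences of Proposition \ref{finitecase}. The main obstacle is the verification that the reconstituted $\alpha$ on $\prod_i N_i$ actually satisfies the associativity axiom \eqref{assoccontra}: this requires checking that when one evaluates $\alpha$ on a map $H\ot H\to N$, the decomposition along $C_i\ot C_j$ is compatible with the contractions on the individual $N_i$, which in turn follows precisely from the orthogonality and centrality of the $\epsilon_i$. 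Once this is in place, the two decompositions combine with the finite dimensional equivalences on each factor to yield the theorem.
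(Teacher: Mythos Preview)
Your argument is correct and mirrors the paper's proof almost step for step: the paper also defines $\alpha_i(n)=\alpha(\epsilon_i(-)n)$, checks orthogonality via \eqref{assoccontra}, writes down the same pair $\beta:n\mapsto(\alpha_i(n))_i$ and its inverse $\alpha\circ\iota$ with $\iota((n_i)_i)(h)=\sum_i\epsilon_i(h)n_i$, and then identifies the induced contraaction on $\prod_i N_i$ as the product of the $C_i$-contraactions.

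Two small points. First, Proposition~\ref{finitecase} is stated for finite dimensional \emph{Hopf algebras}, and the $C_i$ are only simple subcoalgebras; what you actually need (and what the paper uses) is the elementary fact that for a finite dimensional coalgebra $C$ one has $\Mc^C=\widehat{\Mc^C}={}_{C^*}\Mc$, so the citation should be adjusted. Second, your final paragraph, identifying $\widehat{(-)}$ itself with this equivalence, goes beyond Theorem~\ref{semisimple} into Proposition~\ref{semisimplecase}; there the paper needs more than the decomposition $\widehat{M}=\prod_i\Hom(C_i,M_i)^{C_i}$, namely the simplicity of $A_i=C_i^*$ (via the trace pairing $a\mapsto\mathrm{tr}(l_{ab})$) to obtain $\Hom_{A_i}(A_i^*,M_i)\simeq M_i$.
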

\begin{proof}
The assertion about the comodules is immediate.  Now let $M$ be an $H$-contramodule, define $\alpha_i:M\to M$ via $\alpha_i(m)=\alpha(\epsilon_i(-)m)$.  Note that $$\alpha_i(\alpha_j(m))=\alpha(x\mapsto\epsilon_i(x)\alpha(y\mapsto\epsilon_j(y)m))=\alpha(h\mapsto\epsilon_i(h^1)\epsilon_j(h^2)m)=\delta_{ij}\alpha_i(m).$$
Let $M_i=\alpha_i(M)$ and consider $\beta: M\to\prod M_i$ such that $$\beta(m)=(\alpha_i(m))_i$$ and $\iota:\prod M_i\to\Hom(H,M)$ via $$\iota((m_i)_i)(x)=\sum\epsilon_i(x)m_i.$$  We have that \begin{align*}\alpha\iota\beta(m)&=\alpha(x\mapsto\sum\epsilon_i(x)\alpha_i(m))\\
&=\alpha(x\mapsto\sum\epsilon_i(x)\alpha(y\mapsto\epsilon_i(y)m))\\&=\alpha(h\mapsto\sum\epsilon_i(h^1)\epsilon_i(h^2)m)\\&=\alpha(h\mapsto\epsilon(h)m)=m.
\end{align*}
On the other hand we have that $\beta\alpha\iota((m_i)_i)=(\alpha_i(\alpha(x\mapsto\sum\epsilon_j(x)m_j)))_i$ and so we need to show that $$m_i=\alpha_i(\alpha(x\mapsto\sum\epsilon_j(x)m_j)),$$ but the latter is \begin{align*}\alpha(y\mapsto\epsilon_i(y)\alpha(x\mapsto\sum\epsilon_j(x)m_j))&=\alpha(h\mapsto\sum_j\epsilon_i(h^1)\epsilon_j(h^2)m_j)\\
&=\alpha(h\mapsto\epsilon_i(h^1)\epsilon_i(h^2)m_i)\\
&=\alpha(\epsilon_i(-)m_i)=\alpha_i(m_i)=m_i.
\end{align*}
Thus $\beta:M\simeq\prod_i M_i$ and using this identification we see that $\alpha:\Hom(H,M)\to M$ becomes $$\prod_i\Hom(H,M_i)\to\prod_i M_i$$ $$(f_i)_i\mapsto (\alpha(h\mapsto f_i(h_i)))_i$$ so that if we denote by $\alpha^i:\Hom(C_i,M_i)\to M_i$ the map $\alpha^i(f)=\alpha(h\mapsto f(h_i))$ then we see that the original $\alpha$ identifies with $\prod_i\alpha^i:\prod_i\Hom(C_i,M_i)\to\prod_i M_i.$  It is immediate that $\alpha^i$ is a $C_i$-contramodule structure on $M_i$ and since $C_i$ is finite dimensional is the same as a $C_i$-comodule structure.

Conversely, given the data of $\rho_i: M_i\to M_i\ot C_i$ we can define $$\alpha^i:\Hom(C_i,M_i)=M_i\ot C_i^*\to M_i$$  $$m\ot\chi\mapsto \chi(m_1)m_0$$ and assemble the $\alpha^i$ into an $\alpha:\Hom(H,\prod_i M_i)\to\prod_i M_i$ that satisfies the contramodule axioms.

Now let $\phi:M\to N$ be a map of contramodules and let $m\in M$ with $m=(m_i)_i$ under the $\beta$ identification, then $$\phi(m)_i=\alpha_N(\epsilon_i(-)\phi(m))=\phi(\alpha_M(\epsilon_i(-)m))=\phi(m_i)$$ so that $\phi=\prod_i\phi_i$ with $\phi_i: M_i\to N_i$. It is immediate that $\phi_i\in\Hom(M_i,N_i)^{C_i}$ and that conversely, any such $(\phi_i)_i$ data can be reassembled into a $\phi:M\to N$ a map of contramodules.

\end{proof}

\begin{remark}\label{contradiff}
The proof of Theorem \ref{semisimple} demonstrates a difference between $H$-contramodules and $H^*$-modules.  While there is a forgetful functor from the former to the latter, the contramodule condition is better behaved than the $H^*$-module one in the case of the infinite dimensional $H$.   Considering finite dimensional contramodules, that at first glance appear to be given an action indistinguishable from that of an $H^*$-module, it is the associativity that is strictly strengthened in the contramodule case.  More precisely, there exist exotic $1$-dimensional $kG^*$-modules (for example given by non-principal ultrafilters on $G$), yet any $1$-dimensional $kG$-contramodule is supported at some $g\in G$, just as is the case for $kG$-comodules.  The difference is due to the fact that in the contramodule case we have the freedom to work with the full $(H\ot H)^*$ as opposed to only $H^*\ot H^*$.   Of course in the case when $H$ is finite dimensional all three categories: $H$-contramodules, $H$-comodules and $H^*$-modules are equal.
\end{remark}

We need to connect the above to our adjoint pair of functors.

\begin{proposition}\label{semisimplecase}
The correspondence $$\bigoplus_i M_i\leftrightarrow\prod_i M_i$$ of Theorem \ref{semisimple} is given, up to equivalence, by the adjoint functor pair $(\widehat{(-)},(-)')$.  Thus $\widehat{(-)}$ is an equivalence and so is $\widehat{(-)}_H$.
\end{proposition}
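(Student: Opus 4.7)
The plan is to verify that the functor $\widehat{(-)}$, applied to $M = \bigoplus_i M_i \in \Mc^H$, naturally yields $\prod_i \Hom(C_i, M_i)^{C_i}$ with the blockwise contramodule structure, and then to identify this product with $\prod_i M_i$ of Theorem \ref{semisimple} up to a blockwise self-equivalence of $\Mc^{C_i}$. Once this is in place, the equivalence of $\widehat{(-)}$ follows immediately, and the upgraded version $\widehat{(-)}_H$ inherits the property through Corollary \ref{adjunctionscor} and the compatibility recorded in Theorem \ref{summarythm}.

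First I would establish the blockwise decomposition of the functor. For $f \in \Hom(H,M)^H$ and $h \in C_j$, the coaction condition $\rho f(h) = f(h^1) \otimes h^2$ has right-hand side in $M \otimes C_j$, which forces $f(h) \in M_j$. Consequently $f$ corresponds to a tuple $(f_i)_i$ with $f_i : C_i \to M_i$ in $\Mc^{C_i}$, and conversely any such tuple reassembles into an element of $\Hom(H,M)^H$. Moreover, since $\Delta H = \bigoplus_i \Delta C_i$ is block-diagonal, the contraaction formula \eqref{contraaction} $\alpha\theta(h) = \theta_{h^1}(h^2)$ never mixes different blocks, so the contramodule structure on $\Hom(H,M)^H$ is precisely the product of blockwise contraactions on the individual $\Hom(C_i, M_i)^{C_i}$.

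Second I would identify each block with a self-equivalence of $\Mc^{C_i}$. For any finite dimensional subcomodule $M_{i,\beta} \subset M_i$, Proposition \ref{bimoduleprop} specialized with $W = L = k$ and $T = M_{i,\beta}$ gives $\Hom(H, M_{i,\beta})^H \simeq M_{i,\beta}^{S^2} \otimes J$; by the previous step this equals $\Hom(C_i, M_{i,\beta})^{C_i}$. Since $C_i$ is finite dimensional, $\Hom(C_i, -)$ commutes with the filtered colimit $M_i = \varinjlim_\beta M_{i,\beta}$, so $\Hom(C_i, M_i)^{C_i} \simeq M_i^{S^2} \otimes J$ naturally in $M_i$. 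As $J$ is one-dimensional (in this cosemisimple setting) and the $S^2$-twist is invertible, this displays the block functor as a self-equivalence of $\Mc^{C_i}$.

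Together these steps realize $\widehat{(-)}$ as the composition of the equivalence of Theorem \ref{semisimple} with a product of self-equivalences on the blocks, so $\widehat{(-)}$ is itself an equivalence; the statement for $\widehat{(-)}_H$ then follows from Theorem \ref{summarythm}, in which both vertical arrows are equivalences. The main obstacle I expect is making sure that, once the colimit over finite dimensional subcomodules is taken, the resulting contraaction on $\Hom(C_i, M_i)^{C_i}$ really is the pointwise one predicted by Theorem \ref{semisimple}, rather than some twisted variant. This is controlled by the observation that on a finite dimensional space a $C_i$-contramodule structure coincides with a $C_i$-comodule structure and that the residual $S^2$-twist is exactly what the phrase ``up to equivalence'' in the statement is designed to absorb.
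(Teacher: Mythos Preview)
Your argument is correct, and it shares the paper's first move: the block decomposition $\widehat{M}=\prod_i \Hom(C_i,M_i)^{C_i}$ together with the observation that the contraaction \eqref{contraaction} respects the blocks.  From that point on the two proofs diverge.  You invoke Proposition~\ref{bimoduleprop} with $T=M_{i,\beta}$, obtaining $\Hom(C_i,M_i)^{C_i}\simeq M_i^{S^2}\ot J$ after a filtered colimit, and then absorb the $S^2$-twist and the one-dimensional $J$ into the ``up to equivalence'' clause.  The paper instead rewrites $\Hom(C_i,M_i)^{C_i}=\Hom_{A_i}(A_i^*,M_i)$ with $A_i=C_i^*$ a finite-dimensional simple algebra, and uses the trace form $\mu_i:A_i\to A_i^*$, $\mu_i(a)(b)=\text{tr}_{A_i}(l_{ab})$, which is an $A_i$-bimodule isomorphism (nonzero because $\text{char}\,k=0$), to identify the block with $\Hom_{A_i}(A_i,M_i)=M_i$ on the nose; the dual computation $N'=\bigoplus_i A_i^*\ot_{A_i}N_i\simeq\bigoplus_i N_i$ is then immediate.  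Your route has the advantage of reusing machinery already built, while the trace-form argument is more elementary and avoids the residual twist.  One small imprecision in your write-up: $S^2$ only permutes the simple subcoalgebras $C_i$, so $M_i^{S^2}$ need not lie in $\Mc^{C_i}$ for the \emph{same} $i$; your ``self-equivalence of $\Mc^{C_i}$'' is really an equivalence between possibly different blocks, which still assembles into an autoequivalence of the whole and does not affect the conclusion.
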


\begin{proof}
Observe that $$\widehat{M}=\Hom(H,M)^H=\prod_i\Hom(C_i,M_i)^{H_i}=\prod_i\Hom_{A_i}(A_i^*,M_i)$$ where $A_i=C^*_i$ is a unital simple finite dimensional algebra.  Let $$\mu_i:A_i\to A_i^*$$ be given by $\mu_i(a)(b)=\text{tr}_{A_i}(l_{ab})$, i.e., it is the trace of left multiplication by $ab\in A_i$.  Note that $\mu_i$ is an $A_i$-bimodule map and $\mu_i(1)(1)=\text{tr}_{A_i}(1)=\text{dim} A_i\neq 0$ since $\text{char}k=0$, so that $\mu_i$ is an isomorphism by the simplicity of $A_i$.  So   $\widehat{M}\simeq\prod_i\Hom_{A_i}(A_i,M_i)\simeq\prod_i M_i$.

Similarly $$N'=\bigoplus_i C_i\ot_{C_i^*} N_i=\bigoplus_i A_i^*\ot_{A_i}N_i\simeq\bigoplus_i N_i.$$
\end{proof}

\subsection{The case of $H=kG$.}

Let $G$ be an infinite discrete group.  We ask that $G$ be infinite as otherwise all of our considerations here become more or less trivial.  Let $M$ be a $kG$-contramodule, i.e., we view $kG$ as a counital coalgebra with $\Delta(g)=g\ot g$ and $\epsilon(g)=1$. We have the following corollary of Theorem \ref{semisimple}:

\begin{corollary}
The category of $kG$-contramodules $\contrakg$ is equivalent to the category of $G$-graded vector spaces $\vect_G$. The equivalence is given by $$\Gamma(G,-):\vect_G\to \contrakg.$$ Compare this with the well known equivalence $$\Gamma_c(G,-):\vect_G\to\Mc^{kG}$$ where $\Gamma_c$ are global sections with compact support.
\end{corollary}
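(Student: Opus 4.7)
The plan is to specialize Theorem \ref{semisimple} to $H = kG$ and read off the statement, so very little new work is required. The first step is to identify the decomposition of $kG$ into finite-dimensional simple subcoalgebras. Since every $g \in G$ is group-like, the one-dimensional subspace $C_g := k\cdot g$ is a (necessarily simple) subcoalgebra, and $kG = \bigoplus_{g\in G} C_g$, with the counit projecting as $\epsilon_g(h) = \delta_{g,h}$. This is exactly the input required by Theorem \ref{semisimple}.

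The second step is to describe $\Mc^{C_g}$ for each $g$. Because $C_g$ is one-dimensional and $\epsilon_g(g)=1$, counitality forces any $C_g$-coaction on a vector space $V$ to be $v \mapsto v \otimes g$; hence the forgetful functor gives $\Mc^{C_g} \simeq \vect$. Feeding this into the two halves of Theorem \ref{semisimple} produces the equivalences
$$\Mc^{kG} \simeq \bigoplus_{g\in G}\vect, \qquad \contrakg \simeq \prod_{g\in G}\vect.$$
Interpreting a $G$-graded vector space as a sheaf of vector spaces on the discrete set $G$, the right-hand sides are by definition the global sections with compact support, $\Gamma_c(G,-)$, and the global sections, $\Gamma(G,-)$, respectively. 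This yields both equivalences in the corollary.

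There is no real obstacle beyond invoking Theorem \ref{semisimple}; the only sanity check I would perform is to verify that the contraaction produced by that theorem on $\prod_g M_g$ takes the expected form. Specializing the formula $\alpha^i(f) = \alpha(h \mapsto f(h_i))$ from the proof of Theorem \ref{semisimple} to the one-dimensional coalgebra $C_g$ gives $\alpha^g(f) = f(g)$, so the global contraaction $\alpha \colon \Hom(kG, \prod_g M_g) \to \prod_g M_g$ is $f \mapsto (\pi_g f(g))_g$, where $\pi_g$ denotes projection onto the $g$-component. This one-line unwinding confirms the advertised functor $\Gamma(G,-)$, and the comparison with $\Gamma_c(G,-)$ on the comodule side is simply the familiar fact that a $kG$-coaction $v \mapsto \sum v^0 \otimes v^1$ forces each $v$ to lie in a finite sum of homogeneous components.
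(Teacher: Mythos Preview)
Your proposal is correct and follows exactly the paper's approach: the paper's proof is the single line ``Note that $kG=\bigoplus_{g\in G}kg$ with $kg=k$ as coalgebras,'' which is precisely your first step, after which Theorem \ref{semisimple} does all the work. Your additional unpacking of $\Mc^{C_g}\simeq\vect$ and the sanity check on the contraaction are fine elaborations but not needed beyond what the paper records.
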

\begin{proof}
Note that $kG=\bigoplus_{g\in G}kg$ with $kg=k$ as coalgebras.
\end{proof}

It is well known that the category of anti Yetter-Drinfeld modules for $kG$ (since $S^2=Id$ it coincides with the category of Yetter-Drinfeld modules, and thus with the center of the monoidal category of $kG$-modules) is equivalent to the category $\vect_{G/G}$ of $G$-equivariant $G$-graded vector spaces.  More precisely, the $kG$-comodule part of the structure gives the $G$-grading, and the $kG$-module part gives the $G$-action, while the Yetter-Drinfeld compatibility ensures that the action obeys $$x:M_g\to M_{xgx^{-1}}.$$  We have an immediate Corollary to Proposition \ref{semisimplecase}:

\begin{corollary}\label{thm1}
The category of $aYD$-contramodules for $kG$ is equivalent to the category of $G$-graded $G$-equivariant vector spaces via $$\Gamma(G,-):\vect_{G/G}\to aYD\text{-\contramod}.$$
\end{corollary}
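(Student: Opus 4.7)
The plan is to deduce this as a direct specialization of Proposition \ref{semisimplecase} combined with the well-known identification of $aYD$-modules for $kG$. Since $kG = \bigoplus_{g \in G} kg$ is a direct sum of one-dimensional simple subcoalgebras with $C_g = kg$, Theorem \ref{semisimple} and Proposition \ref{semisimplecase} apply and yield an equivalence $\widehat{(-)}_H: aYD\text{-mod}(kG) \to aYD\text{-\contramod}(kG)$ whose underlying action on vector spaces is $\bigoplus_g M_g \mapsto \prod_g M_g$.

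I would then recall, as noted in the paragraph preceding the corollary, the equivalence $aYD\text{-mod}(kG) \simeq \vect_{G/G}$ via $M \mapsto \bigoplus_g M_g$ with $G$-action satisfying $x: M_g \to M_{xgx^{-1}}$. Composing, we obtain an equivalence $\vect_{G/G} \to aYD\text{-\contramod}(kG)$, so the only remaining task is to identify this composition with $\Gamma(G,-)$.

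For this I would unpack formula \eqref{weird action} in the group-algebra case. Since $\Delta^{(2)}(x) = x \otimes x \otimes x$ and $S(x) = x^{-1}$ for $x \in G$, the formula simplifies to $(x \cdot \phi)(y) = x\phi(x^{-1}yx)$. The $kG$-colinearity constraint on $\widehat{M} = \Hom(kG, M)^{kG}$ forces $\phi(g) \in M_g$, so we get the identification $\widehat{M} \simeq \prod_g M_g$ via $\phi \mapsto (\phi(g))_g$; on the level of contramodule structure this is exactly the $\Gamma(G,-)$ of the preceding corollary. Under this identification the action becomes $(x\cdot\phi)_g = x \cdot \phi_{x^{-1}gx}$, which is precisely the equivariant action on the product of graded components induced by $x: M_g \to M_{xgx^{-1}}$. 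Hence $\widehat{(-)}_H$ reproduces $\Gamma(G,-)$, and no serious obstacle arises: the argument is bookkeeping that verifies \eqref{weird action} simplifies correctly in the cocommutative group-algebra setting.
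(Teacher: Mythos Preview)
Your proposal is correct and follows the same route the paper indicates: the paper declares this an ``immediate Corollary to Proposition~\ref{semisimplecase}'' with no further argument, so the logic is exactly the composition you describe---apply Proposition~\ref{semisimplecase} to get $\widehat{(-)}_H$ an equivalence, then invoke the known identification $aYD\text{-mod}(kG)\simeq\vect_{G/G}$. Your additional unpacking of \eqref{weird action} to confirm the resulting functor is literally $\Gamma(G,-)$ is detail the paper leaves implicit, but it is welcome and correctly executed.
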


We now would like to address the question of stability.  A stable $aYD$-module for $kG$ is known to be $G$-graded $G$-equivariant vector space with the stability condition translating into \begin{equation}\label{stability}x\cdot m_x=m_x\end{equation} for all $x\in G$ and all $m_x\in M_x$.  Denote by $\vect'_{G/G}$ the full subcategory of $\vect_{G/G}$ consisting of objects for which \eqref{stability} holds. We have another immediate Corollary to Proposition \ref{semisimplecase}:

\begin{corollary}\label{kgeq}
The functor $$\Gamma(G,-):\vect'_{G/G}\to saYD\text{-\contramod}$$ is an equivalence.
\end{corollary}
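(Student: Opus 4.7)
The plan is to piggy-back on Corollary \ref{thm1}, which already gives the equivalence $\Gamma(G,-):\vect_{G/G}\simeq aYD\text{-\contramod}$. All that remains is to verify that, under this equivalence, the full subcategory $\vect'_{G/G}\subset\vect_{G/G}$ cut out by the stability condition \eqref{stability} corresponds to the full subcategory $saYD\text{-\contramod}\subset aYD\text{-\contramod}$ cut out by the contramodule stability condition $\alpha(r_n)=n$ (where $r_n(h)=hn$).

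First I would unwind the contraaction on $M\simeq\prod_{g\in G}M_g$ supplied by Theorem \ref{semisimple} in the special case $H=kG$. Each simple subcoalgebra $C_g=kg$ is one-dimensional with trivial comodule structure $m\mapsto m\ot g$ on $M_g$, so the piece $\alpha^g\colon\Hom(kg,M_g)\to M_g$ becomes, under $\Hom(kg,M_g)\simeq M_g$ via $f\mapsto f(g)$, simply the identity. Assembling these, the contraaction $\alpha\colon\Hom(kG,M)\to M$ sends a map $f\colon kG\to\prod_h M_h$ to the tuple whose $g$-component is $f(g)_g$, i.e.\ the $M_g$-component of $f(g)\in\prod_h M_h$.

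Next I would compute $\alpha(r_n)$ for $n=(n_g)_g$. Recall that $x\in G$ acts as $M_g\to M_{xgx^{-1}}$, so $(xn)_h=x\cdot n_{x^{-1}hx}$. Applying the explicit formula above,
\[
\alpha(r_n)_g=r_n(g)_g=(gn)_g=g\cdot n_{g^{-1}gg}=g\cdot n_g.
\]
Hence $\alpha(r_n)=n$ if and only if $g\cdot n_g=n_g$ for every $g\in G$ and every $n_g\in M_g$, which is precisely condition \eqref{stability}. This identifies the two subcategories and completes the reduction to Corollary \ref{thm1}.

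I do not anticipate any real obstacle: the proof is a direct unwinding of Theorem \ref{semisimple} in the one-dimensional-subcoalgebra case, and the only mild subtlety is keeping straight that the contraaction on a product $\prod_g M_g$ is indexed over the same $g$ that picks out both the argument of $f$ and its target component, so that the stability condition collapses to a purely diagonal identity.
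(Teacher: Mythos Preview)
Your argument is correct. You reduce to Corollary~\ref{thm1} and then check by a direct computation, specific to $H=kG$, that under the description of Theorem~\ref{semisimple} the contramodule stability $\alpha(r_n)=n$ unwinds to the diagonal condition $g\cdot n_g=n_g$. The unpacking of $\alpha$ as $\alpha(f)_g=f(g)_g$ and of the action as $(xn)_h=x\cdot n_{x^{-1}hx}$ is right, and the conclusion follows.

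The paper takes a different, more structural route. It does not redo the stability match in the $kG$ case at all; instead it appeals to Proposition~\ref{semisimplecase} (which gives that $\widehat{(-)}_H$ is an equivalence here) together with the general fact, established once and for all in Section~\ref{stabilitysection}, that $\widehat{\sigma_M}=\sigma_{\widehat{M}}$, so that the adjoint pair automatically restricts to the stable subcategories. In other words, the paper's ``proof'' of Corollary~\ref{kgeq} is literally the sentence that it is an immediate corollary of Proposition~\ref{semisimplecase}, with the stability bookkeeping already handled abstractly. Your approach trades that abstraction for an explicit, hands-on verification: it is more self-contained and makes the meaning of contramodule stability for $kG$ completely transparent, while the paper's approach emphasizes that nothing special about $kG$ is needed beyond the semisimplicity input to Proposition~\ref{semisimplecase}.
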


We can now restate the Proposition \ref{dualityprop1} more elegantly in the case of $H=kG$.

\begin{proposition}\label{dualityprop}
Let $A$ be a $G$-equivariant algebra, and $\Mc\in \vect'_{G/G}$.  Then $$\widehat{HC}^n_G(A, \Gamma(G,\Mc))\simeq HC^{n,G}(A\rtimes G, \Gamma_c(G,\Mc)).$$
\end{proposition}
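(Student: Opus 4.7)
The plan is to derive this as a direct specialization of Proposition \ref{dualityprop1} to $H = kG$, using the concrete identifications of $saYD$-modules and $saYD$-contramodules provided in the preceding subsection. First I would recall the well-known identification of $saYD$-modules for $kG$ with $\vect'_{G/G}$: an object $\Mc = \bigoplus_{g \in G} \Mc_g \in \vect'_{G/G}$ corresponds to the $saYD$-module $\Gamma_c(G,\Mc) = \bigoplus_g \Mc_g$ with $kG$-comodule structure given by the grading and $kG$-action inherited from $\Mc$. Dually, Corollary \ref{kgeq} identifies $saYD$-contramodules for $kG$ with the same $\vect'_{G/G}$, now via $\Gamma(G,-) = \prod_g \Mc_g$.

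Next, I would observe that under these two identifications the functor $\widehat{(-)}_H : saYD\text{-mod} \to saYD\text{-\contramod}$ corresponds to the transition $\Gamma_c(G,\Mc) \rightsquigarrow \Gamma(G,\Mc)$ induced by the identity on $\Mc \in \vect'_{G/G}$. This is precisely what Proposition \ref{semisimplecase} asserts after applying it to the decomposition $kG = \bigoplus_{g \in G} kg$ into simple subcoalgebras: the underlying correspondence $\bigoplus_i M_i \leftrightarrow \prod_i M_i$ is realized by $\widehat{(-)}$, and Theorem \ref{summarythm} upgrades this bare comodule-contramodule equivalence to one that also matches $saYD$-module structures with $saYD$-contramodule structures. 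In particular, setting $M = \Gamma_c(G,\Mc)$ we get $\widehat{M} \simeq \Gamma(G,\Mc)$ as $saYD$-contramodules for $kG$.

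Finally, I would apply Proposition \ref{dualityprop1} with $H = kG$, the $kG$-module algebra $A$ (which is the same datum as a $G$-equivariant algebra), and the $saYD$-module $M = \Gamma_c(G,\Mc)$. Substituting the identification $\widehat{M} \simeq \Gamma(G,\Mc)$ from the previous step into the conclusion of that proposition yields exactly the claimed isomorphism
$$\widehat{HC}^n_G(A, \Gamma(G,\Mc)) \simeq HC^{n,G}(A \rtimes G, \Gamma_c(G,\Mc)).$$
There is no real obstacle at this stage; all the technical content sits in Propositions \ref{dualityprop1} and \ref{semisimplecase} together with Theorem \ref{summarythm}. The only thing worth being careful about is matching conventions, namely that the $G$-equivariance hypothesis on $A$ is the correct translation of the $kG$-module algebra hypothesis, and that the stability condition $x \cdot m_x = m_x$ in $\vect'_{G/G}$ corresponds on both sides to stability of the $saYD$-module (respectively $saYD$-contramodule) so that cyclic cohomology is defined.
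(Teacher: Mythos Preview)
Your proposal is correct and matches the paper's approach exactly: the paper does not even give a separate proof of this proposition, presenting it merely as a restatement of Proposition~\ref{dualityprop1} for $H=kG$ after the identifications of Corollary~\ref{kgeq} and Proposition~\ref{semisimplecase}. Your write-up simply makes these implicit steps explicit, including the key point that $\widehat{\Gamma_c(G,\Mc)}\simeq\Gamma(G,\Mc)$ as $saYD$-contramodules.
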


\begin{remark}
While the right hand side of the above Proposition is definition invariant, the left hand side uses the definition of \cite{s} and not the more classical one used in \cite{hks}.
\end{remark}

\subsection{A new ``coefficient".}\label{nonrep}
Since the introduction of coefficients in symmetric $2$-contratraces in \cite{hks}, there remained an obvious question: do these simply generalize the already well known coefficients in $saYD$-modules or contramodules to other settings, or do these traces furnish examples of coefficients that had not yet been considered even in the classical theories?  In \cite{s} we gave a derived version of the definition of cyclic cohomology with coefficients that restricted the possible symmetric $2$-contratraces to the left exact ones.  The results obtained in \cite{sk} immediately tell us that in the case of $H$-module algebras we need to look beyond the representable   symmetric $2$-contratraces if we are to obtain anything but the usual $saYD$-contramodule coefficients.  In the present paper, Corollary \ref{reptrace1} implies the same about $H$-comodule algebras, i.e., we need a non-representable contratrace to get away from the usual $saYD$-module coefficients.  We will construct one below.

Let $G$ have infinitely many conjugacy classes (such as when $G=\mathbb{Z}$ for example).  Let $\Mc_{\left<g\right>}\in \vect'_{G/G}$ be supported on the conjugacy class $\left<g\right>$, for example we can let $$(\Mc_{\left<g\right>})_x=\begin{cases}k,\quad x\in\left<g\right>\\0,\quad else\end{cases}$$ with the trivial $G$-action.  Then each $\Mc_{\left<g\right>}$ yields a representable left exact symmetric $2$-contratrace $$\Fc_{\left<g\right>}(V)=\Hom(V,\Gamma_c(G,\Mc_{\left<g\right>}))^G,$$ yet $$\bigoplus_{\left<g\right>}\Fc_{\left<g\right>}:V\mapsto \bigoplus_{\left<g\right>}\Fc_{\left<g\right>}(V)$$ is an example of a non-representable, left exact symmetric $2$-contratrace on $\Mc^H$.  Note that taking $\Mc$ to be the superposition of all $\Mc_{\left<g\right>}$'s would result in $V\mapsto \prod_{\left<g\right>}\Fc_{\left<g\right>}(V)$.

\section{Periodicities.}\label{period}

In this section we revisit the $YD_i$-modules from \cite{hks} and see that under the conditions that we have been looking at in this paper, there is nothing new that arises and we still only have the Yetter-Drinfeld and the anti-Yetter-Drinfeld modules and contramodules; this is the first observed periodicity.  In addition, we examine a natural symmetry on these objects and observe that it too is periodic; this we refer to as the second periodicity.

We recall  the definition of $YD_i$-modules:

\begin{definition}
Let $M$ be a left module and a right comodule over $H$, and let $i\in\mathbb{Z}$.  We say that $M$ is a $YD_i$-module if
\begin{equation}\label{genyd2}
  (hm)_{0}\ot(hm)_{1}=h^{2}m_{0}\ot h^{3}m_{1} S^{-1-2i}(h^{1}),
  \end{equation} for $h\in H$ and $m\in M$.
\end{definition}
\begin{remark}
Equivalently, we can define $YD_i$-modules by requiring that the comodule structure map $M\to M\ot H$ is $H$-equivariant with respect to the $H$-structure on the right hand side given by $x\cdot(m\ot h)=x^2m\ot x^3 h S^{-1-2i}(x^{1})$.
\end{remark}

Note that $YD_{-1}$-modules are anti-Yetter-Drinfeld modules, while $YD_0$-modules are Yetter-Drinfeld modules.

We can rephrase the above a little more conceptually.  Let $\mathbb{Z}$ act on $\Mc^H$ with $1\cdot M=M^{S^2}$ so that we may consider the monoidal category $\Mc^H\rtimes\mathbb{Z}$.  We get an immediate generalization of Lemma \ref{comodulecenter}:

\begin{lemma}
We have an equivalence of monoidal categories: $$\Zc_{\Mc^H_{fd}}(\Mc^H\rtimes\mathbb{Z})\simeq\bigoplus_{i\in\mathbb{Z}}YD_{-i}\text{-mod}.$$
\end{lemma}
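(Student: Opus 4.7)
The plan is to generalize Lemma \ref{comodulecenter} by exploiting the $\mathbb{Z}$-grading on $\Mc^H\rtimes\mathbb{Z}$. First I would unpack the structure: objects are pairs $(M,i)$ with $M\in\Mc^H$ and $i\in\mathbb{Z}$, with tensor product $(M,i)\otimes(N,j)=(M\otimes N^{S^{2i}},i+j)$. Embedding $\Mc^H_{fd}$ via $T\mapsto(T,0)$, the bimodule action becomes $T\cdot(M,i)\cdot L=(T\otimes M\otimes L^{S^{2i}},i)$, so $\Mc^H\rtimes\mathbb{Z}$ splits as a $\mathbb{Z}$-indexed direct sum $\bigoplus_{i\in\mathbb{Z}}\Mc^{H(i)}$ of $\Mc^H_{fd}$-bimodule categories, where $\Mc^{H(i)}$ denotes $\Mc^H$ with bimodule structure $T\cdot M\cdot L=T\otimes M\otimes L^{S^{2i}}$. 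Observe that $\Mc^{H(1)}=\Mc^{H\#}$ of Lemma \ref{comodulecenter}. Since any central structure respects the grading, the center splits accordingly: $\Zc_{\Mc^H_{fd}}(\Mc^H\rtimes\mathbb{Z})=\bigoplus_i\Zc_{\Mc^H_{fd}}(\Mc^{H(i)})$, and it suffices to identify each summand with $YD_{-i}\text{-mod}$.

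For each $i$, I would repeat the argument of Lemma \ref{comodulecenter} with $S^2$ replaced by $S^{2i}$. Given $M\in{_H\Mc^H}$, the candidate central structure is $\tau_T:T\otimes M\to M\otimes T^{S^{2i}}$, $\tau_T(t\otimes m)=t_1 m\otimes t_0$; its inverse is $m\otimes t\mapsto t_0\otimes S(t_1)m$ regardless of $i$, since $S$ is the unique convolution inverse of $Id_H$. A direct computation using three-fold coassociativity shows that $\tau_T$ is $H$-colinear for all $T\in\Mc^H_{fd}$ if and only if $M$ satisfies
\[
(hm)_0\otimes(hm)_1=h^2 m_0\otimes h^3 m_1 S^{2i-1}(h^1),
\]
which is precisely the $YD_{-i}$ condition \eqref{genyd2}. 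Conversely, given central data on $M\in\Mc^H$, the $H$-action is recovered via the limit argument of Lemma \ref{comodulecenter}: write $H=\varinjlim C$ over finite-dimensional subcoalgebras and extract the action from $(Id\otimes\epsilon_C)\circ\tau_C$ in the limit.

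Monoidality of the equivalence follows by tracing through the tensor product in $\Mc^H\rtimes\mathbb{Z}$, which pairs objects of grades $i$ and $j$ into grade $i+j$ using the $S^{2i}$-twist on the right factor; this matches the inherited monoidal structure on $\bigoplus_i YD_{-i}\text{-mod}$. The main obstacle is the verification that $H$-colinearity of $\tau_T$ corresponds exactly to the $YD_{-i}$ condition: this is a somewhat tedious but routine calculation in Sweedler notation, using the antipode identity $S(g^1)g^2=\epsilon(g)1$ and the fact that $S^{2i-1}$ is an anti-homomorphism while $S^{2i}$ is a Hopf algebra automorphism. Once the $i=1$ case is understood, the general case requires only careful tracking of the powers of $S$.
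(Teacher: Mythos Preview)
Your proposal is correct and is precisely the intended argument: the paper does not give a proof of this lemma at all, stating only that it is ``an immediate generalization of Lemma \ref{comodulecenter}''. Your decomposition of $\Mc^H\rtimes\mathbb{Z}$ into the graded pieces $\Mc^{H(i)}$ with bimodule structure $T\cdot M\cdot L=T\ot M\ot L^{S^{2i}}$, followed by running the proof of Lemma \ref{comodulecenter} with $S^{2i}$ in place of $S^2$, is exactly what that phrase means, and your identification of the $H$-colinearity of $\tau_T$ with the $YD_{-i}$ condition \eqref{genyd2} is the correct computation.
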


There are a few consequences of the above.  First, if $M\in YD_{i}\text{-mod}$ and $N\in YD_{j}\text{-mod}$ then $M\ot N\in YD_{i+j}\text{-mod}$ with the usual comodule structure, but ${_{S^{-2i}}N}\ot M$ as an $H$-module, i.e., $$(m\ot n)_0\ot(m\ot n)_1=m_0\ot n_0\ot m_1n_1$$ but $$x\cdot(m\ot n)=x^2m\ot S^{-2i}(x^1)n.$$  Second, if $M\in YD_{i}\text{-mod}$ then so is $1\cdot M={_{S^{-2}}M}^{S^2}\in YD_{i}\text{-mod}$.  Third,  $\Mc^H$ has internal Homs, and so does $\Mc^H\rtimes\mathbb{Z}$, i.e., $$\Hom^l((M,j),(N,i))=(\Hom^l((i-j)M,N),i-j)$$ and the same for right Homs.  Consequently, $\Zc_{\Mc^H_{fd}}(\Mc^H\rtimes\mathbb{Z})$ has internal Homs as well.  In particular $\Mc^H_{fd}$ is rigid, so is $\Mc_{fd}^H\rtimes\mathbb{Z}$ with $(V,i)^*=((-i)V^*, -i)$ and ${^*(V,i)}=((-i){^*V}, -i)$ and so is $\Zc_{\Mc^H_{fd}}(\Mc_{fd}^H\rtimes\mathbb{Z})$.  Thus  if $M\in YD^{fd}_{i}\text{-mod}$ then we have elements $M^\star$ and ${^\star M}$ in $YD_{-i}^{fd}\text{-mod}$ that are its right and left duals.

Just as we have generalized $aYD\text{-mod}$ to $YD_i\text{-mod}$, we can do the same to $aYD\text{-\contramod}$.

\begin{definition}
Let $M$ be a left $H$-module and a right $H$-contramodule, we say that $M$ is a $YD_i$-contramodule if the contramodule structure $\alpha:\Hom(H,M)\to M$ is $H$-equivariant with respect to the $H$-action on the left given by
\begin{equation}\label{genyd4}
  h\cdot f=h^2f(S(h^3)-S^{2-2i}(h^1)),
  \end{equation} where $h\in H$ and $f\in \Hom(H,M)$.
\end{definition}

Note that $YD_1$-contramodules are $aYD$-contramodules.

\subsection{The first periodicity.}

We can easily generalize the content of Section \ref{adjunctionssection} as follows.  We have the Proposition/Definition below.

\begin{proposition}\label{functor}
Let $M$ be a $YD_{i-1}$-module, define $\widehat{M}=\Hom(H,M)^H$ and equip the latter with a left $H$-action via $$h\cdot f=h^2f(S(h^3)-S^{-2i}(h^1))$$ and an $H$-contraaction as before \eqref{contraaction}.  Let $N$ be a $YD_{i+1}$-contramodule, define $N'=H\odot_H N$ and equip the latter with a left $H$-action via $$h\cdot (x\ot n)=S^{1-2i}(h^3)xS(h^1)\ot h^2 n$$ and an $H$-coaction as before \eqref{coaction}.

This defines an adjoint pair of functors $((-)_H',\widehat{(-)}_H)$: $$\xymatrix{YD_{i-1}\text{-mod}\ar@<+.5ex>[rr]^{\widehat{(-)}_H} & &  YD_{i+1}\text{-\contramod}\ar@<+.5ex>[ll]^{(-)_H'}.}$$
\end{proposition}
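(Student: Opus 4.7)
The plan is to mirror the structure of the proof of Theorem \ref{summarythm} and the development of Section \ref{adjunctionssection}, inserting the antipode twists $S^{-2i}$ and $S^{1-2i}$ exactly at the points where they are needed to match the generalized $YD_i$ identities. At $i=0$ everything should specialize to the $aYD$-case already handled, so the $i=0$ computations serve as a template to be adapted. I would break the argument into three logical chunks: well-definedness of the enriched structures, verification of the new $YD$-conditions, and upgrade of the existing adjunction from \cite{pos}.

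First I would verify that the formulas give the claimed structures. For $\widehat{M}$, the action $h\cdot f=h^2 f(S(h^3)-S^{-2i}(h^1))$ a priori produces an element of $\Hom(H,M)$; I must check that it preserves the comodule-morphism condition defining $\Hom(H,M)^H$. This is a direct coproduct chase in which the $YD_{i-1}$-module condition \eqref{genyd2} on $M$ (which carries the factor $S^{1-2i}$) cancels against the $S^{-2i}$ in the action, leaving the required identity. For $N'$, the action $x\cdot(h\ot n)=S^{1-2i}(x^3)hS(x^1)\ot x^2 n$ on $H\ot N$ must descend to the cokernel $H\odot_H N$, which reduces to compatibility with the defining relation $h\ot\alpha(f)\sim h^2\ot f(h^1)$; here the $YD_{i+1}$-contramodule condition \eqref{genyd4} on $N$ is what makes the two sides match. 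Unitality and associativity of both actions are straightforward coalgebra manipulations.

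Second, I would verify the new $YD$-conditions. For $\widehat{M}$ one has to check that \eqref{contraaction} together with the specified $H$-action satisfies \eqref{genyd4} at index $i+1$, i.e.\ $h\cdot\alpha(\theta)=\alpha\bigl(h^2\theta(S(h^3)-S^{-2i}(h^1))\bigr)$; unwinding the definitions and using coassociativity together with $\theta_h\in\Hom(H,M)^H$ gives both sides. For $N'$, one checks \eqref{genyd2} at index $i-1$ by applying the coaction \eqref{coaction} to $S^{1-2i}(x^3)hS(x^1)\ot x^2 n$ and comparing with $x^2(h\ot n)_0\ot x^3(h\ot n)_1 S^{1-2i}(x^1)$, which is a routine coalgebra computation.

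Third, I would upgrade the adjunction. The underlying counit $h\ot f\mapsto f(h)$ and unit $n\mapsto\{h\mapsto h\ot n\}$ from \cite{pos} are already an adjunction of the uncovered functors between $\Mc^H$ and $\contra$. It remains to show that these maps are $H$-linear for the twisted actions on both sides. This is again a direct coproduct calculation in which the $S^{-2i}$ and $S^{1-2i}$ factors arrange themselves correctly because of the $YD_{i-1}$ identity on $M$, respectively the $YD_{i+1}$ identity on $N$; the triangle identities then hold for free from the underlying adjunction. The main obstacle, and essentially the only substantive one, is bookkeeping: every individual computation has the shape of the $i=0$ case of Section \ref{adjunctionssection} but must be tracked through multiple $S$-twists. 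A cleaner alternative, which I would turn to if the direct calculation becomes unwieldy, would be to package the $S^2$-twists as a $\mathbb{Z}$-action on $\Mc^{H\#}$ and ${^{\#}\contra}$ (as foreshadowed by the first periodicity discussion) and deduce the proposition by transport from Theorem \ref{summarythm}.
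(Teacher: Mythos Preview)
Your proposal is correct and aligns with the paper's treatment: the paper gives no explicit proof of this proposition, instead presenting it as a ``Proposition/Definition'' prefaced by the remark that one can ``easily generalize the content of Section~\ref{adjunctionssection}'', which is precisely the plan you lay out. Your direct-verification route is the natural unpacking of that generalization, and your alternative of transporting Theorem~\ref{summarythm} along the $\mathbb{Z}$-action is the conceptual shortcut the paper's framing implicitly points to.
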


Let us again (see Remark \ref{bigk} and the preceding discussion) consider the trivial $YD_0$-module $k$ from which we obtain by the Proposition \ref{functor} the object $J=\widehat{k}$ which is now seen to be in $YD^{fd}_2\text{-\contramod}$. Conversely, again considering the trivial $YD_0$-contramodule $k$, we obtain $K=k'$ which is now seen to be in $YD^{fd}_{-2}\text{-mod}$.  If we denote by $YD_i^{rfd}\text{-\contramod}$ the full subcategory of $YD_i\text{-\contramod}$ that consists of objects that as contramodules are in $\contra_{rfd}$ then $J\in YD_2^{rfd}\text{-\contramod}$.  Observe that we have an easy Lemma:

\begin{lemma}\label{rationalcontra}
We have an equivalence (equality actually) of categories: $$\iota: YD_i^{fd}\text{-mod}\to YD_i^{rfd}\text{-\contramod}$$ that does not change the underlying vector space $M$, nor the $H$-action, and sends the coaction to the contraaction: $$\Hom(H,M)=H^*\ot M\to M$$ $$\chi\ot m\mapsto \chi(S^2(m_1))m_0.$$
\end{lemma}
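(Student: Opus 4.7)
The plan is to verify four items in sequence: (i) the formula $\alpha(\chi\ot m)=\chi(S^2(m_1))m_0$ defines an $H$-contramodule structure on the finite dimensional vector space $M$; (ii) this contramodule is rational; (iii) the $YD_i$-module condition on $M$ translates exactly into the $YD_i$-contramodule condition \eqref{genyd4} under this identification; (iv) $\iota$ admits an inverse, delivering the claimed (actual) equality of categories.

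For (i), the unit axiom \eqref{unitcontra} is immediate since $\epsilon(S^2(m_1))m_0=\epsilon(m_1)m_0=m$. For the associativity axiom \eqref{assoccontra}, finite dimensionality of $M$ lets us decompose any $f\in\Hom(H\ot H,M)$ as a finite sum of pure tensors $\chi\ot\eta\ot m$; both sides of \eqref{assoccontra} then reduce, using coassociativity of $\rho$ together with the fact that $S^2$ is a coalgebra map, to the common value $\chi(S^2(m_{01}))\eta(S^2(m_1))m_{00}$. Item (ii) is automatic: finite dimensionality forces $\rho$ to land in $M\ot C$ for some finite dimensional subcoalgebra $C\subset H$, whereupon $\alpha$ factors through $\Hom(C,M)$.

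The substance is (iii). Writing $f=\chi\ot m\in\Hom(H,M)$, the $YD_i$-contramodule action \eqref{genyd4} produces $h\cdot f=\chi'\ot(h^2m)$ with $\chi'(x)=\chi(S(h^3)xS^{2-2i}(h^1))$. Applying the defining formula for $\alpha$ and then expanding $(h^2m)_0\ot(h^2m)_1$ via the $YD_i$-module condition \eqref{genyd2} (after refining the Sweedler expansion to $\Delta^{(4)}(h)=h^1\ot\cdots\ot h^5$), together with the fact that $S^2$ is an algebra homomorphism, the result takes the form $\chi\bigl(S(h^5)S^2(h^4)S^2(m_1)S^{1-2i}(h^2)S^{2-2i}(h^1)\bigr)h^3m_0$. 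The key identity is $\sum S(x^2)S^2(x^1)=\epsilon(x)$, a consequence of the antipode axiom and $S$ being an antihomomorphism. Applied twice---directly to contract $S(h^5)S^2(h^4)$, and then in the $S^{-2i}$-twisted form $\sum S^{1-2i}(x^2)S^{2-2i}(x^1)=\epsilon(x)$, which is valid because $S^{-2i}$ is a coalgebra homomorphism as an even power of $S$---the expression collapses to $\chi(S^2(m_1))(hm_0)=h\cdot\alpha(\chi\ot m)$, as required. Since every step is reversible, the $YD_i$ conditions match on the two sides.

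For (iv), a rational finite dimensional contramodule $N$ has its structure map factoring through $\Hom(C,N)$ for some finite dimensional subcoalgebra $C\subset H$; since $\dim C<\infty$, a $C$-contramodule is equivalent data to a $C^*$-module, hence to a $C$-comodule, and via $C\hookrightarrow H$ this lifts to an $H$-comodule structure on $N$ whose $\iota$-image returns the given $\alpha$ (the $S^2$-twist being already built into the formula). Checking that morphisms correspond reduces to the observation that an $H$-linear map $\phi$ intertwines the two contraactions if and only if $\phi(m)_0\ot\phi(m)_1=\phi(m_0)\ot m_1$, i.e., if and only if $\phi$ is $H$-colinear. The main obstacle throughout is the Sweedler bookkeeping of step (iii); once the two collapsing identities are in hand, the remaining verifications are formal.
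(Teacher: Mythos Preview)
The paper does not actually prove this lemma: it is introduced with ``Observe that we have an easy Lemma'' and no argument is supplied. Your write-up therefore fills in what the author left to the reader, and the verification you give is correct. The only point worth flagging is in (iv): when you recover the $H$-comodule structure from the rational contraaction, you should make explicit that the coaction is $\rho(n)=n_{[0]}\ot S^{-2}(n_{[1]})$, where $n_{[0]}\ot n_{[1]}$ is the naive coaction coming from the $C^*$-module structure; this is presumably what you mean by ``the $S^2$-twist being already built into the formula,'' and it works because $S^{-2}$ is a coalgebra map, but saying so plainly removes any ambiguity. Step (iii), where the real content lies, is handled cleanly: the two collapsing identities $S(x^2)S^2(x^1)=\epsilon(x)$ and its $S^{-2i}$-twist are exactly what is needed, and your Sweedler bookkeeping is accurate.
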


As a consequence, we have $\iota^{-1}J\in YD_2^{fd}\text{-mod}$ which is the dual of $K\in YD_{-2}^{fd}\text{-mod}$.

\begin{remark}\label{periodicity 1}
If $H$ is a Hopf algebra with $J\neq 0$ then both $YD_i\text{-mod}$ and $YD_i\text{-\contramod}$ are $2$-periodic, i.e., $$J\ot-:YD_i\text{-mod}\simeq YD_{i+2}\text{-mod}$$ and the same for contramodules.
\end{remark}

For a finite dimensional $H$, the functor $\widehat{(-)}_H$ is essentially the periodicity above.  Not so for the infinite dimensional case.

\subsection{The second periodicity.}
Recall our discussion of stability in Section \ref{stabilitysection}.  We observe that the $$\sigma\in\Aut(Id_{aYD\text{-mod}})$$
that was used to define stability for $aYD$-modules (and its contramodule variant) can be generalized, with an interesting difference, to an arbitrary $i$ for both modules and contramodules.  More precisely,  $$\sigma\in\text{Iso}(Id_{YD_{i-1}\text{-mod}},(-i)\cdot),$$ i.e., for $M\in YD_{i-1}\text{-mod}$ we have  $\sigma_M: M\to{_{S^{2i}}M^{S^{-2i}}}$, with $m\mapsto S^{2i}(m_1)m_0$ and the inverse $m\mapsto S^{-1}(m_1)m_0$, is an identification in $YD_{i-1}\text{-mod}$.

\begin{remark}\label{second periodicity}
The above implies that if $M$ is a Yetter-Drinfeld module, then it is canonically isomorphic to ${_{S^{-2}}M^{S^2}}$ as a Yetter-Drinfeld module.  More generally, the action of $\mathbb{Z}$ on $YD_{i-1}\text{-mod}$ factors through $\mathbb{Z}/i\mathbb{Z}$.  We will see below that the same holds for $YD_{i+1}\text{-\contramod}$.
\end{remark}

Note that the $M\mapsto{_{S^{-2}}M^{S^2}}$ symmetry of $YD_i$-modules also exists for $YD_i$-contramodules, i.e.,  ${_{S^{-2}}N^{S^2}}$ has its $H$-action modified by $S^{-2}$ and $\alpha^{S^2}(f)=\alpha(f(S^2(-)))$.  Then we have $$\sigma\in\text{Iso}(Id_{YD_{i+1}\text{-\contramod}},(-i)\cdot),$$ i.e., for $N\in YD_{i+1}\text{-\contramod}$ we have  $\sigma_N: N\to{_{S^{2i}}N^{S^{-2i}}}$, with $n\mapsto \alpha(r_n)$ and the inverse $n\mapsto \alpha(h\mapsto S^{2i-1}(h)n)$, is an identification in $YD_{i+1}\text{-\contramod}$.

Furthermore, the generalized functor $\widehat{(-)}_H$ of Proposition \ref{functor} is compatible with these symmetries, namely the diagram of isomorphisms $$\xymatrix{\widehat{M}\ar[rr]^{\widehat{\sigma_M}}\ar[dr]_{\sigma_{\widehat{M}}} & & \widehat{{_{S^{2i}}M^{S^{-2i}}}}\\
& {_{S^{2i}}\widehat{M}^{S^{-2i}}}\ar[ur]_{\,\,f\mapsto f(S^{2i}(-))} &
}$$ commutes in $YD_{i+1}\text{-\contramod}$, where $M\in YD_{i-1}\text{-mod}$.


\bigskip

\noindent Department of Mathematics and Statistics,
University of Windsor, 401 Sunset Avenue, Windsor, Ontario N9B 3P4, Canada

\noindent\emph{E-mail address}:
\textbf{ishapiro@uwindsor.ca}


\begin{thebibliography}{999}
\bibitem{hopf} E. Abe, \emph{Hopf algebras},  Cambridge Tracts in Mathematics, 74. Cambridge University Press, Cambridge-New York, 1980.

\bibitem{contra} T. Brzezinski, \emph{Hopf-cyclic homology with contramodule coefficients},  Quantum groups and noncommutative spaces, 1–8,
Aspects Math., E41, Vieweg + Teubner, Wiesbaden, 2011.

\bibitem{hkrs2}  P. M. Hajac, M. Khalkhali, B. Rangipour, and Y. Sommerh\"{a}user, \emph{Stable anti-Yetter-Drinfeld modules},	 C. R. Acad. Sci. Paris, Ser. I, 338 (2004) 587-590.

\bibitem{hks} M. Hassanzadeh, M. Khalkhali, and I. Shapiro, \emph{Monoidal Categories,  2-Traces, and Cyclic Cohomology}, preprint  arXiv:1602.05441.



\bibitem{sk} I. Kobyzev, I. Shapiro, \emph{Applications of a categorical approach to cyclic cohomology with coeffcients}, in preparation.


\bibitem{pos} L. Positselski, \emph{Homological algebra of semimodules and semicontramodules}, Semi-infinite homological algebra of associative algebraic structures. Appendix C in collaboration with Dmitriy Rumynin; Appendix D in collaboration with Sergey Arkhipov. Mathematics Institute of the Polish Academy of Sciences. Mathematical Monographs (New Series), 70. Birkhäuser/Springer Basel AG, Basel, 2010.

\bibitem{s} I. Shapiro, \emph{Some invariance properties of cyclic cohomology with coefficients}, preprint   arXiv:1611.01425.


\bibitem{integral} M. E. Sweedler, \emph{Integrals for Hopf algebras}, Ann. of Math. (2) 89 1969 323--335.

\end{thebibliography}
\end{document}